\theoremstyle{plain}
\newtheorem{theorem}{Theorem}
\newtheorem{prop}[theorem]{Proposition}
\theoremstyle{remark}
\newtheorem*{remark}{Remark}
\DeclareMathOperator{\Tr}{Tr}
\newcommand{\ud}{\,\mathrm{d}}
\newcommand{\Or}{\mathcal{O}}
\newcommand{\wt}[1]{\widetilde{#1}}
\newcommand{\abs}[1]{\left\lvert#1\right\rvert}
\newcommand{\norm}[1]{\left\lVert#1\right\rVert}
\renewcommand{\Re}{\mathrm{Re}} 
\newcommand{\I}{\imath}
\newcommand{\barint}{\kern4pt \raise3.4pt\hbox{\vrule height.6pt
    width7pt} \kern-11pt \int}
\newcommand{\REV}[1]{#1}
\title[RANDOMIZED ESTIMATION OF SPECTRAL DENSITIES MADE
ACCURATE]{Randomized estimation of spectral densities of large matrices made accurate} 
\author[L. LIN]{Lin Lin}
\address[Lin Lin]{Department of Mathematics, University of California,
Berkeley, Berkeley, CA 94720.\\
Computational Research Division, Lawrence Berkeley National
Laboratory, Berkeley, CA 94720.}
\email{linlin@math.berkeley.edu}
\begin{document}

\maketitle

\begin{abstract}
For a large Hermitian matrix $A\in \mathbb{C}^{N\times N}$, it is often
the case that the only affordable operation is matrix-vector
multiplication. In such case, randomized method is a powerful way to
estimate the spectral density (or density of states) of $A$.  However,
randomized methods developed so far for estimating spectral densities
only extract information from different random vectors independently,
and the accuracy is therefore inherently limited to
$\Or(1/\sqrt{N_{v}})$ where $N_{v}$ is the number of random vectors.  In
this paper we demonstrate that the ``$\Or(1/\sqrt{N_{v}})$ barrier'' can
be overcome by taking advantage of the correlated information of random
vectors when properly filtered by polynomials of $A$. Our method uses
the fact that the estimation of the spectral density essentially
requires the computation of the trace of a series of \REV{matrix
functions that are numerically low rank}. By repeatedly applying $A$ to
the same set of random vectors and taking different linear combination
of the results, we can sweep through the entire spectrum of $A$ by
building such low rank decomposition at different parts of the spectrum.
Under some assumptions, we demonstrate that a robust and efficient
implementation of such spectrum sweeping method can compute the spectral
density accurately with $\Or(N^2)$ computational cost and $\Or(N)$
memory cost.  Numerical results indicate that the new method can
significantly outperform existing randomized methods in terms of
accuracy.  
As an application, we demonstrate a way to
accurately compute a trace of a smooth matrix function, by carefully
balancing the smoothness of the integrand and the regularized density of
states using a deconvolution procedure.
\end{abstract}








\section{Introduction}

Given an $N\times N$ Hermitian matrix $A$, the \textit{spectral
density}, also commonly referred to as the \emph{density of states} (DOS), 
is formally defined as
\begin{equation}
  \phi(t) = \frac{1}{N}  \sum_{i=1}^N \delta(t - \lambda_i).
  \label{eq:DOS0}
\end{equation}
Here $\delta$ is the Dirac distribution commonly referred to as the
Dirac
$\delta$-``function'' (see
e.g.~\cite{Schwartz1966,ByronFuller1992,RichtmyerBeiglbock1981}), and
the $\lambda_{i}$'s are the eigenvalues of $A$, assumed here to be
labeled non-decreasingly.  

The DOS is an important quantity in many physics problems, in particular
in quantum physics,
and a large volume of numerical methods were developed by physicists and
chemists~\cite{Ducastelle1970,Turek88,DraboldSankey1993,WheelerBlumstein1972,SilverRoder1994,Wang1994,WeiseWelleinAlvermannEtAl2006,CovaciPeetersBerciu2010,JungCzychollKettemann2012,SeiserPettiforDrautz2013,HaydockHeineKelly1972,ParkerZhuHuangEtAl1996}
for this purpose. Besides being used as a \REV{qualitative} visualization tool for understanding
spectral characteristics of the matrix, the DOS can also be used as to
quantitatively compute the trace of a matrix function, as given in the
formal formulation below
\begin{equation}
  \Tr[f(A)] = \sum_{i=1}^{N} f(\lambda_{i}) \equiv
  N \int_{-\infty}^{\infty}
  f(t)\phi(t) \ud t.
  \label{eqn:tracefA}
\end{equation}
Here $f(t)$ is a smooth function, and the formal integral in
Eq.~\eqref{eqn:tracefA} should be interpreted in the sense of
distribution. 

If one had access to all the eigenvalues of $A$, the task of computing
the DOS would become a trivial one.  However, in many applications, the
dimension of $A$ is large. The computation of its entire spectrum is
prohibitively expensive, and a procedure that relies entirely on
multiplications of $A$ with vectors is the only viable approach.
Fortunately, in many applications $A$ only has $\Or(N)$ nonzero entries,
and therefore the cost of matrix-vector multiplication, denoted by
$c_{\mathrm{matvec}}$, is $\Or(N)$. In some other cases the matrix
is a dense matrix but fast matrix-vector multiplication method still
exists with $\Or(N \log^{p} N)$ cost, where $p$ is a integer that is not
too large. This is the case when
the matrix-vector multiplication can be carried out effectively with
fast algorithms, such as the fast Fourier transform (FFT), the fast
multipole method (FMM)~\cite{GreengardRokhlin1987}, the hierarchical
matrix~\cite{Hackbusch1999}, and the fast butterfly
algorithm~\cite{CandesDemanetYing2009}, to name a few.

Rigorously speaking, the DOS is a distribution and cannot be directly
approximated by smooth functions.  In order to assess the accuracy of a
given numerical scheme for estimating the DOS, the DOS must be properly
\textit{regularized}.  The basic idea for estimating the DOS is to
first expand the regularized DOS using simple functions
such as polynomials.  Then it can be shown that the estimation of the
DOS can be obtained by computing the trace of a polynomial of
$A$, which can then be estimated by 
repeatedly applying $A$ to a set of random vectors.  This procedure has been discovered
more or less independently by statisticians~\cite{Hutchinson1989} and by
physicists and chemists~\cite{SilverRoder1994,Wang1994}, and will be
referred to as Hutchinson's method in the following. In physics such
method is often referred to as the kernel polynomial method
(KPM)~\cite{WeiseWelleinAlvermannEtAl2006} with a few different 
variants.  A recent review on the choice of regularization and different
numerical methods for estimating the DOS is given in~\cite{LinSaadYang2015}. There are also a variety of randomized
estimators that can be used in Hutchinson's method, and the quality of
different estimators is analyzed in~\cite{AvronToledo2011}.

\textbf{Contribution.} To the extent of our
knowledge, all randomized methods so far for estimating the DOS are
based on different variants of Hutchinson's method.
These methods estimate the DOS by averaging 
the information obtained from $N_{v}$ random vectors directly. 
The numerical error, when properly defined, decays asymptotically as
$\Or(1/\sqrt{N_v})$. As a result, high accuracy is difficult to achieve:
every extra digit of accuracy requires increasing the number of random
vectors by $100$ fold.

In this work, we demonstrate that the accuracy for estimating the
regularized DOS can be significantly improved by making use of the
\textit{correlated information} obtained among different random vectors.
We use the fact that each point of the DOS can be evaluated as the trace
of a \REV{numerically} low rank matrix, and such trace can be evaluated by repeatedly
applying $A$ to a small number of random vectors, and by taking
certain linear combination of the resulting vectors.  If different set of
random vectors were needed for different points on the spectrum the
method will be prohibitively expensive.  However, we demonstrate that it
is possible to use \textit{the same set of random vectors} to ``sweep
through'' in principle the entire spectrum.  Therefore we call our
method a ``spectrum sweeping method''.


\REV{Our numerical results indicate that the spectrum sweeping method
can significantly outperform Hutchinson type methods in terms of
accuracy,  as the number of random vectors $N_{v}$ becomes large.}
However, the computational cost and the storage cost can still be large
when the DOS needs to be evaluated at a large number of points.
Furthermore, the accuracy of the spectrum sweeping method may be
compromised when the right number of randomized vectors is not known
\textit{a priori}.  We develop a robust and efficient implementation of
the spectrum sweeping method to overcome these two problems.  Under
certain assumption on the distribution of eigenvalues of the matrix $A$,
and the cost of the matrix-vector multiplication is $\Or(N)$, we
demonstrate that the computational cost of the new method scales as
$\Or(N^2)$ and the storage cost scales as $\Or(N)$ for increasingly
large matrix dimension $N$. We also demonstrate that the new method for
evaluating the DOS can be useful for accurate trace estimation as in
Eq.~\eqref{eqn:tracefA}.

\textbf{Other related works.} The spectrum sweeping method is not to be
confused with another set of methods under the name of ``spectrum
slicing''
methods~\cite{Parlett1980,SakuraiSugiura2003,Polizzi2009,SchofieldChelikowskySaad2012,FangSaad2012,AktulgaLinHaineEtAl2014}.
The idea of the spectrum slicing methods is still to obtain a partial
diagonalization of the
matrix $A$. The main advantage of spectrum slicing methods is enhanced
parallelism compared to conventional diagonalization methods.  Due to the natural orthogonality of eigenvectors
corresponding to distinct eigenvalues of a Hermitian matrix, the
computational cost for each set of processors handling different parts
of the spectrum can be reduced compared to direct diagonalization
methods.  However, the overall scaling for spectrum slicing methods is
still $\Or(N^3)$ when a large number of eigenvalues and eigenvectors are
to be computed. 

%
%

\textbf{Notation.} In linear algebra notation, a vector
$w\in \mathbb{C}^{m}$ is always treated as a column vector, and its conjugate transpose
is denoted by $w^{*}$. For a randomized matrix $B\in \mathbb{C}^{m\times
n}$, its entry-wise expectation value is denoted by $\mathbb{E} [B]$ and
its entry-wise variance is denoted by $\mathrm{Var} [B]$.  We call $B\in
\mathbb{C}^{m\times n}$ a (real) random Gaussian matrix, if each entry
of $B$ is real, and follows independently the normal distribution
$\mathcal{N}(0,1)$.  In the case when $n=1$, $B$ is called a (real)
random Gaussian vector. \REV{The imaginary unit is denoted by $\I$.}

The paper is organized as follows. In section~\ref{sec:DOS} we introduce
the DOS estimation problem. We also demonstrate the
Delta-Gauss-Chebyshev (DGC) method, which is a variant of the kernel
polynomial method, to estimate the DOS.  We develop in
section~\ref{sec:doslowrank} the spectrum sweeping method based on the
randomized estimation of the trace of \REV{numerically} low rank matrices, and demonstrate
a robust and efficient implementation of the spectrum sweeping method in
section~\ref{sec:robustdoslowrank}.  We show how the DOS
estimation method can be used to effective compute the trace of a matrix
function in section~\ref{sec:traceest}.  Following the numerical results
in section~\ref{sec:numer}, we conclude and discuss the future work in
section~\ref{sec:conclusion}.

\section{Density of state estimation for large matrices}
\label{sec:DOS}

Without loss of generality, we shall assume that the spectrum of
$A$ is contained in the interval $(-1,1)$. \REV{For a general matrix with spectrum
contained in the interval $(a,b)$, we can apply a spectral transformation
\[
\wt{A} = \frac{2A - (a+b) I}{b-a}.
\]
The spectrum of $\wt{A}$ is contained in $(-1,1)$, and all the
discussion below can be applied to $\wt{A}$. }
The fact that the spectral density $\phi(t)$ is defined in terms of
Dirac $\delta$-functions suggests that no smooth function can converge
to the spectral density in the limit of high
resolution, \REV{in the usual $L^{p}$ norms ($p\ge 1$)} ~\cite{LinSaadYang2015}.  In order to compare different
numerical approximations to the spectral density in a meaningful way,
the DOS should be regularized.  One simple method is to employ
a Gaussian regularization
\begin{equation}
  \phi_{\sigma}(t) = \sum_{i=1}^{N} g_{\sigma}(t-\lambda_{i}) =
  \Tr[g_{\sigma}(tI-A)].
  \label{}
\end{equation}
Here 
\begin{equation}
  g_{\sigma}(s) = \frac{1}{N \sqrt{2\pi \sigma^2}}
  e^{-\frac{s^2}{2\sigma^2}}
  \label{eqn:gaussian}
\end{equation}
is a Gaussian function.  In the following our goal is to compute the
smeared DOS $\phi_{\sigma}$. 

The key of computing the DOS is the estimation 
of the trace of a matrix function without diagonalizing the matrix.
To the extent of our knowledge, randomized methods developed so far 
are based on Hutchinson's method or its
variants~\cite{Hutchinson1989,AvronToledo2011}. \REV{The following
simple and yet useful theorem is a simple variant of Hutchinson's method
and explains how the method works.}
\begin{theorem}[\REV{Hutchinson's method}]\label{thm:EVarTrace}
  Let $A\in \mathbb{C}^{N\times N}$ be a Hermitian matrix, and
  \REV{$w\in\mathbb{R}^{N}$} be a random Gaussian vector, 
  then
  \begin{equation}
    \mathbb{E} [w^{*} A w] = \Tr[A], \quad \mathrm{Var} [w^{*} A w] =
    2\sum_{i\ne j} (\Re A_{ij})^2.
    \label{eqn:EVarTrace}
  \end{equation}
\end{theorem}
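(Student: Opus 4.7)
The statement splits into two independent Gaussian-moment computations, and I would handle them in turn.

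For the expectation, since $w$ is real I would simply expand $w^{*} A w = \sum_{i,j} w_i A_{ij} w_j$, take expectation term by term, and apply $\mathbb{E}[w_i w_j] = \delta_{ij}$. Only the diagonal survives, giving $\sum_i A_{ii} = \Tr A$.

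For the variance the first move is to reduce to a real symmetric quadratic form. Because $A^{*} = A$, the matrix $\Im(A)$ is real antisymmetric, so $w^{*}\Im(A) w = 0$ for any real $w$ and the quadratic form collapses to $w^{*} M w$ with $M := \Re(A)$ symmetric. Next I would expand $(w^{*} M w)^2 = \sum_{i,j,k,l} M_{ij} M_{kl}\, w_i w_j w_k w_l$ and apply Isserlis' theorem for centered Gaussian fourth moments,
\[
\mathbb{E}[w_i w_j w_k w_l] = \delta_{ij}\delta_{kl} + \delta_{ik}\delta_{jl} + \delta_{il}\delta_{jk}.
\]
The three Wick contractions evaluate respectively to $(\Tr M)^2$ and, using $M_{ij}=M_{ji}$, to $\sum_{i,j} M_{ij}^2$ twice. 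Subtracting $(\mathbb{E}[w^{*}Aw])^2 = (\Tr M)^2$ from this yields the variance. There is no serious obstacle here; the only delicate point is the combinatorics of the Isserlis pairings and keeping track of which contraction produces which term.

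The one thing I would want to double-check is the matching of the result to the stated right-hand side. The computation above naturally produces $2\sum_{i,j}(\Re A_{ij})^2$, which exceeds the stated $2\sum_{i\ne j}(\Re A_{ij})^2$ by the diagonal contribution $2\sum_i A_{ii}^2$. This diagonal piece disappears precisely when one uses a Rademacher $\pm 1$ estimator (since then $w_i^2 \equiv 1$ and Isserlis is replaced by the corresponding $\pm 1$ four-point rule), so I would expect the intended proof either to invoke that variant implicitly or to treat only the off-diagonal ``fluctuation'' part of $A$. Either way, the core calculation is the same Isserlis expansion sketched above.
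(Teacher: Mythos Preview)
Your approach is the same as the paper's: expand the quadratic form, take moments term by term, and subtract. The paper carries this out in two lines, asserting directly that
\[
\mathbb{E}\Bigl[\sum_{i,j,k,l} w_i w_j w_k w_l\, A_{ij} A_{kl}\Bigr]
= \sum_{i,k} A_{ii} A_{kk} + 2\sum_{i\ne j}(\Re A_{ij})^2,
\]
without writing out the Isserlis pairings explicitly. Your reduction to $M=\Re A$ via the antisymmetry of $\Im A$ is a clean extra step the paper omits, but otherwise the mechanics are identical.

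Your closing observation is not a weakness in your argument but a genuine discrepancy in the paper. The Isserlis expansion you wrote gives $(\Tr M)^2 + 2\sum_{i,j} M_{ij}^2$, hence $\mathrm{Var}[w^{*}Aw] = 2\sum_{i,j}(\Re A_{ij})^2$, \emph{including} the diagonal. A one-dimensional sanity check confirms this: for $N=1$ and $A=a\in\mathbb{R}$ one has $\mathrm{Var}[a w^2]=2a^2$, whereas the paper's formula $2\sum_{i\ne j}(\Re A_{ij})^2$ would give $0$. The $i\ne j$ restriction is indeed the Rademacher variance, not the Gaussian one, and the paper's proof simply asserts the wrong intermediate identity without justification. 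So your computation is correct and the paper's stated variance (and the line of its proof that produces it) is off by the diagonal term $2\sum_i A_{ii}^2$.
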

\begin{proof}
  First 
  \[
  \mathbb{E} [w^{*} A w] = \mathbb{E}\left[\sum_{ij} w_{i} w_{j}
  A_{ij}\right] =
  \sum_{i} A_{ii} = \Tr[A].
  \]
  Here we used that $\mathbb{E} [w]= 0, \quad \mathbb{E} [w w^{*}] = I$
  and $w$ is a real vector. \REV{This follows directly from that each
  entry of $w$ is independently distributed and follows the Gaussian
  distribution $\mathcal{N}(0,1)$.}

  Second,
  \[
  \begin{split}
    \mathrm{Var} [w^{*} A w] = &\mathbb{E} \left[(w^{*} A w)^2 -
    (\Tr[A])^2\right]
  = \mathbb{E} \left[\sum_{ijkl} w_{i} w_{j} w_{k} w_{l} A_{ij}
  A_{kl} - (\Tr[A])^2\right]\\
  = & \sum_{ik} A_{ii} A_{kk} + 2 \sum_{i\ne j} (\Re A_{ij})^2 -
  (\sum_{i} A_{ii})^2 = 2 \sum_{i\ne j} (\Re A_{ij})^2.
  \end{split}
  \]
\end{proof}

Using Theorem~\ref{thm:EVarTrace}, if we choose $W\in
\mathbb{R}^{N\times N_{v}}$ to be a random Gaussian matrix, then 
\[
\Tr[g_{\sigma}(tI-A)] \approx \frac{1}{N_{v}}\Tr[W^{*}g_{\sigma}(tI-A)
W].
\]

In practice in order to compute $g_{\sigma}(A-tI) W$, we can expand
$g_{\sigma}(A-tI)$ into polynomials of $A$ for each $t$,
and then evaluate the trace of polynomial of $A$. A stable and efficient
implementation can be obtained by using Chebyshev polynomials.
Other choices of polynomials such as Legendre polynomials can be
constructed similarly. Using the Chebyshev
polynomial, $g_{\sigma}(tI-A)$ is approximated by a polynomial
of degree $M$ as
\begin{equation}
	g_{\sigma}(tI-A) \approx g^{M}_{\sigma}(tI-A) := \sum_{l=0}^{M} \mu_{l}(t) T_{l}(A).
  \label{eqn:gcheb}
\end{equation}
The coefficients $\{\mu_{l}(t)\}$ need to be evaluated for each
$t$.  Since the Dirac $\delta$-function is regularized using a Gaussian
function, following the notion in~\cite{LinSaadYang2015} we refer to
Eq.~\eqref{eqn:gcheb} as the ``Delta-Gauss-Chebyshev'' (DGC) expansion.
\REV{Since $g_{\sigma}(t-\cdot)$ is a smooth function on $(-1,1)$,}
the coefficient $\mu_{l}(t)$ in the DGC expansion can be computed as
\REV{
\begin{equation}
  \mu_{l}(t) = \frac{2-\delta_{l0}}{\pi} \int_{-1}^{1}
  \frac{1}{\sqrt{1-s^2}} g_{\sigma}(t-s) T_{l}(s) \ud s.
  \label{eqn:mucoef1}
\end{equation}
}
Here $\delta_{l0}$ is the Kronecker $\delta$ symbol. With change of
coordinate $s=\cos \theta$, and use the fact that
$T_{l}(s)=\cos(l\arccos(s))$, we have
\REV{
\begin{equation}
  \mu_{l}(t) = \frac{2-\delta_{l0}}{2\pi} \int_{0}^{2\pi}
  g_{\sigma}(t-\cos\theta) \cos(l\theta) \ud \theta.
  \label{eqn:mucoef2}
\end{equation}
}
Eq.~\eqref{eqn:mucoef2} can be evaluated by discretizing the interval 
$[0,2\pi]$ using a uniform grid, and the resulting quadrature can be
efficiently computed by
the Fast Fourier Transform (FFT). This procedure is given in
Alg.~\ref{alg:dgcexpansion}, \REV{and this procedure is usually inexpensive.} 

\begin{algorithm}[h]
\begin{small}
\begin{center}
  \begin{minipage}{5in}
\begin{tabular}{p{0.5in}p{4.5in}}
{\bf Input}:  &  \begin{minipage}[t]{4.0in}
  Chebyshev polynomial degree $M$;\\
  Number of integration points $2 N_{\theta}$, with $N_{\theta}>M$; \\
  Smooth function $g_{\sigma}(t-\cdot)$.
\end{minipage} \\
{\bf Output}:  &  \begin{minipage}[t]{4.0in}
  Chebyshev expansion coefficients \REV{$\{\mu_{l}(t)\}_{l=0}^{M}$}.
\end{minipage} 
\end{tabular}
\begin{algorithmic}[1]
  \STATE Let $\theta_{j}=\frac{j\pi}{N_{\theta}}, \quad
  j=0,\ldots,2N_{\theta}-1$.
  \STATE \REV{$g_{j}=g_{\sigma}(t-\cos\theta_{j})$}.
  \STATE Compute \REV{$\hat{g}=\mathcal{F} [g]$}, where $\mathcal{F}$ is the discrete 
  Fourier transform. Specifically
  \[
  \REV{\hat{g}_{l}} =  \sum_{j=0}^{2N_{\theta}-1}
  e^{-\frac{\I 2\pi jl}{2N_{\theta}}} \REV{g_{j}}.
  \]
  \STATE $\REV{\mu_{l}(t)}=\frac{2-\delta_{l0}}{2N_{\theta}}\Re
  \REV{\hat{g}_{l}},\quad
  l=0,\ldots,M$.
\end{algorithmic}
\end{minipage}
\end{center}
\end{small}
\caption{Computing the Delta-Gauss-Chebyshev (DGC) polynomial expansion
\REV{at a given point $t$}.}
\label{alg:dgcexpansion}
\end{algorithm}

Using the DGC expansion and Hutchinson's method, $\phi_{\sigma}(t)$ can
be approximated by
\[
\wt{\phi}_{\sigma}(t) := \Tr[g^{M}_{\sigma}(tI-A)] \approx  \sum_{l=0}^{M}
\mu_{l}(t) \frac{1}{N_{v}}\Tr[W^*T_{l}(A) W] \equiv
\sum_{l=0}^{M} \mu_{l}(t) \zeta_{l}.
\]
The resulting algorithm, referred to as the DGC algorithm in the
following, is given in Alg.~\ref{alg:dgcDOS}.


\begin{algorithm}  
\begin{small}
\begin{center}
  \begin{minipage}{5in}
\begin{tabular}{p{0.5in}p{4.5in}}
{\bf Input}:  &  
  \begin{minipage}[t]{4.0in}
    Hermitian matrix $A$ with eigenvalues between $(-1,1)$;\\ 
    A set of points $\{t_{i}\}_{i=1}^{N_{t}}$ at which the DOS is to be evaluated;\\
    Polynomial degree $M$; Smearing parameter $\sigma$;\\
    Number of random vectors $N_{v}$.\\
  \end{minipage}\\
  {\bf Output}: &  Approximate DOS
  $\{\wt{\phi}_{\sigma}(t_i)\}_{i=1}^{N_{t}}$. \\
\end{tabular}
\begin{algorithmic}[1]
\FOR {each $t_i$}
\STATE Compute the coefficient $\{\mu_{l}(t_{i})\}_{l=0}^{M}$ for each
$t_{i},i=1,\ldots,N_{t}$ using Alg.~\ref{alg:dgcexpansion}.
\ENDFOR
\STATE Initialize $\zeta_k = 0 $ for $k=0,\cdots, M$.
\STATE Generate a random Gaussian matrix $W\in \mathbb{R}^{N\times
N_{v}}$.
\STATE Initialize the three term recurrence matrices 
$V_{m},V_{p}\gets \mathbf{0}\in \mathbb{C}^{N\times N_{v}},V_{c}\gets
W$.
\FOR {$l=0,\ldots,M$}
\STATE Accumulate $\zeta_{l} \gets \zeta_{l}+ \frac{1}{N_{v}} \Tr[W^{*}
V_{c}]$.
\STATE $V_{p}\gets (2-\delta_{l0}) A V_{c} - V_{m}$.
\STATE $V_{m}\gets V_{c}, V_{c}\gets V_{p}$.
\ENDFOR
\FOR {$i=1,\ldots, N_{t}$}
\STATE Compute $\wt{\phi}_{\sigma}(t_{i}) \gets \sum_{l=0}^{M} \mu_{l}(t_{i}) \zeta_{l}$.
\ENDFOR

\end{algorithmic}
\end{minipage}
\end{center}
\end{small}
\caption{The Delta-Gauss-Chebyshev (DGC) method for estimating the DOS.}
\label{alg:dgcDOS}
\end{algorithm}

We remark that the DGC algorithm can be viewed as a variant of the
kernel polynomial method (KPM)~\cite{WeiseWelleinAlvermannEtAl2006}.
The difference is that KPM \textit{formally} expands the Dirac
$\delta$-function, which is not a well defined function but only a
distribution.  Therefore the accuracy of KPM cannot be properly measured
until regularization is introduced~\cite{LinSaadYang2015}.  On the other
hand, DGC introduces a Gaussian regularization from the beginning, and
the DGC expansion~\eqref{eqn:gcheb} is not a formal expansion, and its
accuracy can be \REV{relatively easily analyzed.} 
\REV{The proof of the accuracy of the DGC expansion can be obtained via
the same techniques used in
e.g.~\cite{Demko1977,BenziBoitoRazouk2013,Lin2015}, and is given below for completeness.}

\REV{
Let $k$ be a non-negative integer, and $\mathbb{P}_{k}$ be the set of
all polynomials of degrees less than or equal to $k$ with real
coefficients. For a real continuous function $f$
on $[-1,1]$, the best approximation error is defined as
\begin{equation}
  E_{k}(f) = \REV{\min_{p\in \mathbb{P}_{k}}}\left\{
  \norm{f-p}_{\infty}:=
  \max_{-1\le x\le 1}\abs{f(x)-p(x)}\right\}.
  \label{eqn:approxerror}
\end{equation}
It is known that such best approximation error is achieved by Chebyshev
polynomials~\cite{Meinardus1967}.
Consider an ellipse in the complex plane $\mathbb{C}$ with foci in $-1$
and $1$, and $a>1,b>0$ be the half axes so that the vertices of the
ellipse are $a,-a, ib, -ib$, respectively. Let the sum of the half axes
be $\chi=a+b$, then using the identity $a^2-b^2=1$ we have
\[
a = \frac{\chi^2+1}{2\chi}, \quad b = \frac{\chi^2-1}{2\chi}.
\]
Thus the ellipse is determined only by $\chi$, and such ellipse is
denoted by $\mathcal{E}_{\chi}$.
Then Bernstein's theorem~\cite{Meinardus1967} is stated  as follows.
\begin{theorem}[Bernstein]
  Let $f(z)$ be analytic in $\mathcal{E}_{\chi}$ with $\chi>1$, and
  $f(z)$ is a real valued function for real $z$. Then
  \begin{equation}
    E_{k}(f) \le \frac{2 M(\chi)}{\chi^{k} (\chi-1)},
    \label{eqn:bernstein}
  \end{equation}
  where
  \begin{equation}
    M(\chi) = \sup_{z\in \mathcal{E}_{\chi}}\abs{f(z)}.
    \label{eqn:Mchi}
  \end{equation}
  \label{thm:bernstein}
\end{theorem}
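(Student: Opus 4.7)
The plan is to reduce the Chebyshev approximation problem on $[-1,1]$ to a Laurent approximation problem on a circle via the Joukowski map $z = \tfrac{1}{2}(w + w^{-1})$. As a first step, I would verify that this map sends $\{|w|=\chi\}$ bijectively onto $\mathcal{E}_{\chi}$: writing $w = \chi e^{\I\theta}$ and separating real and imaginary parts gives $z = a\cos\theta + \I b\sin\theta$ with $a=(\chi^2+1)/(2\chi)$ and $b=(\chi^2-1)/(2\chi)$, matching the half-axes in the statement. Moreover, the map restricts to a biholomorphism of $\{|w|>1\}$ onto $\mathbb{C}\setminus[-1,1]$, so for every $\rho\in(1,\chi)$ the pullback $F(w) := f\bigl(\tfrac{1}{2}(w+w^{-1})\bigr)$ is analytic in the closed annulus $\rho^{-1}\le|w|\le\rho$ and satisfies $\sup_{|w|=\rho}|F(w)| \le M(\rho) \le M(\chi)$ by the maximum modulus principle.

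Next I would Laurent-expand $F(w) = \sum_{n\in\mathbb{Z}} c_n w^n$. The symmetry $z(w)=z(w^{-1})$ of the Joukowski map forces $F(w)=F(w^{-1})$, hence $c_{-n}=c_n$. Combining this with the elementary identity $\tfrac{1}{2}(w^n+w^{-n}) = T_n(z)$ (verified on $|w|=1$ where $z=\cos\theta$ and $T_n(\cos\theta)=\cos n\theta$, and extended by analytic continuation), the Laurent series rearranges into the Chebyshev expansion
$$f(z) = c_0 + 2\sum_{n=1}^{\infty} c_n T_n(z),$$
which converges uniformly on $[-1,1]$. The hypothesis that $f$ is real on the real axis additionally ensures each $c_n$ is real, so the partial sums will lie in $\mathbb{P}_k$ as required. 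Cauchy's estimate on the contour $|w|=\rho$ gives $|c_n|\le M(\rho)\,\rho^{-|n|}$, and letting $\rho\to\chi^-$ yields the clean bound $|c_n|\le M(\chi)\,\chi^{-|n|}$.

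Finally, the truncated polynomial $p_k(z) := c_0 + 2\sum_{n=1}^{k} c_n T_n(z) \in \mathbb{P}_k$ combined with $|T_n(z)|\le 1$ on $[-1,1]$ gives
$$\norm{f-p_k}_\infty \;\le\; 2\sum_{n=k+1}^{\infty}|c_n| \;\le\; 2M(\chi)\sum_{n=k+1}^{\infty}\chi^{-n} \;=\; \frac{2M(\chi)}{\chi^k(\chi-1)},$$
from which the bound on $E_k(f)$ follows by definition. The main technical subtlety is the boundary passage $\rho\to\chi^-$: since analyticity is only assumed inside the open region bounded by $\mathcal{E}_{\chi}$ and not on $\mathcal{E}_{\chi}$ itself, Cauchy's estimate cannot be applied directly on the target contour. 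One must therefore derive the coefficient bounds on $|w|=\rho$ with $\rho<\chi$, observe that $M(\rho)\le M(\chi)$ uniformly, pass each individual bound $|c_n|\rho^n \le M(\chi)$ to the limit $\rho\to\chi^-$, and only then sum the resulting geometric series. Everything else is a routine combination of standard complex-analytic estimates.
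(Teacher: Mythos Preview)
Your argument is correct and is in fact the standard route to Bernstein's bound: pull back through the Joukowski map, Laurent-expand in the annulus, identify $\tfrac12(w^n+w^{-n})$ with $T_n(z)$, and sum the geometric tail. Your remark about passing from $\rho<\chi$ to $\chi$ is the one genuine point of care, and you handle it appropriately.

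The paper, however, does not prove this theorem at all: it merely states it and cites Meinardus~\cite{Meinardus1967} for the proof, then invokes it as a black box in the proof of Theorem~\ref{thm:errordgc}. So there is no ``paper's own proof'' to compare against. What you have written is essentially Meinardus's argument (and the one found in most modern treatments, e.g.\ Trefethen's \emph{Approximation Theory and Approximation Practice}), so in that sense your route matches the reference the paper defers to.
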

}

\REV{Using Theorem~\ref{thm:bernstein}, a quantitative description of
the approximation properties for Gaussian functions is given in
Theorem~\ref{thm:errordgc}.}

\begin{theorem}
  Let $A\in \mathbb{C}^{N\times N}$ be a Hermitian matrix with spectrum
  in $(-1,1)$.  For any $t\in \mathbb{R}$, the error of a $M$-term DGC expansion~\eqref{eqn:gcheb} is
  \begin{equation}
    \abs{\Tr[g_{\sigma}(tI-A)]-\Tr[g^{M}_{\sigma}(tI-A)]}
    \le \frac{C_{1}}{\sigma} (1+C_{2}\sigma)^{-M},
    \label{eqn:chebyaccuracy}
  \end{equation}
  \label{thm:errordgc}
  where $C_{1},C_{2}$ are constants independent of \REV{$A$ as well as
  $\sigma,M,t$}.
\end{theorem}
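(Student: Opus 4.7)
The plan is to reduce the trace error to a pointwise polynomial approximation problem on $[-1,1]$ and then invoke Bernstein's theorem with a carefully chosen ellipse. First, using the spectral decomposition of $A$ and that $g^{M}_{\sigma}(tI-\cdot)$ is a polynomial, one has
\[
\Tr[g_{\sigma}(tI-A)]-\Tr[g^{M}_{\sigma}(tI-A)] = \sum_{i=1}^{N}\bigl(g_{\sigma}(t-\lambda_{i}) - g^{M}_{\sigma}(t-\lambda_{i})\bigr).
\]
Since every $\lambda_{i}\in(-1,1)$, this sum is bounded in absolute value by $N$ times the uniform error, on $[-1,1]$, of approximating $f(s):=g_{\sigma}(t-s)$ by its degree-$M$ Chebyshev truncation. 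Up to the Lebesgue constant of Chebyshev projection (which is $\Or(\log M)$ and may be absorbed by slightly shrinking the eventual decay base), this uniform error is controlled by the best approximation error $E_{M}(f)$ defined in~\eqref{eqn:approxerror}.

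Next, I invoke \thmref{thm:bernstein}. For every $t\in\mathbb{R}$, $f(z)=g_{\sigma}(t-z)$ is entire, so it is analytic in every Bernstein ellipse $\mathcal{E}_{\chi}$. Writing $z=x+\I y$ on $\mathcal{E}_{\chi}$ and taking the modulus,
\[
\abs{f(z)} = \frac{1}{N\sqrt{2\pi}\,\sigma}\,\exp\!\left(\frac{y^{2}-(t-x)^{2}}{2\sigma^{2}}\right),
\]
which is maximized at the top and bottom of the ellipse, giving
\[
M(\chi)\le \frac{1}{N\sqrt{2\pi}\,\sigma}\exp\!\left(\frac{b(\chi)^{2}}{2\sigma^{2}}\right),\qquad b(\chi):=\frac{\chi^{2}-1}{2\chi}.
\]
Note that the bound on $M(\chi)$ is independent of $t$, which is what allows the final estimate to be uniform in $t$.

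The final step is to balance the two $\sigma$-dependent factors appearing in Bernstein's bound~\eqref{eqn:bernstein}: the Gaussian factor $\exp(b^{2}/(2\sigma^{2}))$ grows rapidly as $b$ increases, whereas the geometric factor $\chi^{-M}$ decays faster for larger $\chi$. The natural choice is $\chi=1+C_{2}\sigma$ for an absolute constant $C_{2}>0$, because then $b(\chi)\le C_{2}\sigma$ and hence $\exp(b(\chi)^{2}/(2\sigma^{2}))$ is uniformly bounded in $\sigma$, while $\chi^{M}=(1+C_{2}\sigma)^{M}$ produces the claimed decay base. Substituting these estimates into~\eqref{eqn:bernstein} and then multiplying by $N$ (the $1/N$ normalization in $g_{\sigma}$ cancels) yields a bound of the form $C_{1}\sigma^{-\alpha}(1+C_{2}\sigma)^{-M}$, from which~\eqref{eqn:chebyaccuracy} follows after absorbing the subleading $\sigma$-dependence into the constants.

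The main obstacle is precisely this last balancing: because a Gaussian grows exponentially in the imaginary direction at rate $1/(2\sigma^{2})$, one cannot enlarge the Bernstein ellipse beyond the scale $b\sim\sigma$ without destroying the bound on $M(\chi)$. This constraint forces $\chi-1=\Theta(\sigma)$, which in turn yields only the $\sigma$-dependent geometric decay rate $(1+C_{2}\sigma)^{-M}$ rather than a fixed exponential rate in $M$; the constants $C_{1}$ and $C_{2}$ arise from this balancing and are independent of $A$, $\sigma$, $M$, and $t$.
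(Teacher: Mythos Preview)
Your proposal is correct and follows essentially the same route as the paper: reduce the trace error to uniform polynomial approximation on $[-1,1]$ via the spectral theorem, apply Bernstein's theorem (\thmref{thm:bernstein}) to the entire function $s\mapsto g_{\sigma}(t-s)$, and choose $\chi=1+C_{2}\sigma$ so that $\exp(b(\chi)^{2}/(2\sigma^{2}))$ becomes a $\sigma$-independent constant.

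Two minor remarks. First, the Lebesgue-constant detour is unnecessary: the bound~\eqref{eqn:bernstein} holds directly for the Chebyshev truncation error (not only for the best approximation), so no $\log M$ factor ever appears and nothing needs to be ``absorbed by slightly shrinking the decay base.'' Second, your last step of ``absorbing the subleading $\sigma$-dependence into the constants'' does not quite deliver the stated prefactor $C_{1}/\sigma$: keeping the Gaussian normalization $1/(N\sqrt{2\pi}\,\sigma)$ in your bound for $M(\chi)$, together with the $1/(\chi-1)=\Or(\sigma^{-1})$ from Bernstein, gives a prefactor of order $\sigma^{-2}$ rather than $\sigma^{-1}$. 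The paper's own proof avoids this only by silently dropping the $1/(\sqrt{2\pi}\,\sigma)$ factor when bounding $M(\chi)$; so this discrepancy is shared with the paper and does not reflect a gap in your argument.
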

\begin{proof}
  \REV{For any $t\in \mathbb{R}, \sigma>0$, the Gaussian function
  $g_{\sigma}(t-\cdot)$ is analytic in any ellipse $\mathcal{E}_{\chi}$ with
  $\chi>1$, then
  \[
  M(\chi)=\sup_{z= x+iy\in
  \mathcal{E}_{\chi}}\abs{g_{\sigma}(t-(x+iy))}  \le 
  \frac{1}{N}\sup_{z= x+iy\in \mathcal{E}_{\chi}} 
  e^{\frac{y^2}{2\sigma^2}}\le 
  \frac{1}{N}
  e^{\frac{(\chi-\frac{1}{\chi})^2}{8\sigma^2}}.
  \]
  For any $\alpha>0$, let 
  \begin{equation}
    \chi=1+\alpha \sigma,
    \label{eqn:chichoice}
  \end{equation}
  then
  $\chi-\frac{1}{\chi} \le 2\alpha \sigma$, and
  \begin{equation}
    M(1+\alpha \sigma) \le \frac{1}{N} e^{\frac{\alpha^2}{2}}.
    \label{eqn:Mbound}
  \end{equation}
  Then the error estimate follows from Theorem~\ref{thm:bernstein} that
  \[
  E_{M}(g_{\sigma}(t-\cdot)) \le \frac{2}{N\alpha\sigma}
  e^{\frac{\alpha^2}{2}}
    (1+\alpha\sigma)^{-M}.
  \]
  Finally
  \[
  \begin{split}
  &\abs{\Tr[g_{\sigma}(tI-A)]-\Tr[g^{M}_{\sigma}(tI-A)]} \\
  \le& 
  N \norm{g_{\sigma}(tI-A)-g^{M}_{\sigma}(tI-A)}_{2}\\
  = & N E_{M}(g_{\sigma}(t-\cdot))  
  \le \frac{2}{\alpha\sigma} e^{\frac{\alpha^2}{2}}
    (1+\alpha\sigma)^{-M}.
  \end{split}
  \]
  The theorem is then proved by defining
  $C_{1}=\frac{2}{\alpha} e^{\frac{\alpha^2}{2}}$, $C_{2}=\alpha$.
  Since $\alpha$ can be chosen to be any constant due to the analyticity of the
  Gaussian function in the complex plane, both $C_{1}$ and $C_{2}$ are independent of
  $A$ as well as $\sigma,M,t$.
  }
\end{proof}

\REV{Theorem~\ref{thm:errordgc} indicates that }the error of the DGC
algorithm is split into two parts: the error of the Chebyshev expansion
(approximation error) and the error due to random sampling (sampling
error). \REV{According to Theorem~\ref{thm:errordgc}, it} is sufficient to choose $M$ to be
\REV{$\Or(\sigma^{-1}\abs{\log\sigma})$} to
ensure that the error of the Chebyshev expansion is negligible.
Therefore the error of the DGC mainly comes from the sampling error,
which decays slowly as $\frac{1}{\sqrt{N_{v}}}$.


\section{Spectrum sweeping method for estimating spectral densities}\label{sec:doslowrank}

In this section we present an alternative randomized algorithm called
the spectrum sweeping method for
estimating spectral densities.  
\REV{Our numerical results indicate that the spectrum sweeping method
can significantly outperform Hutchinson type methods in terms of
accuracy,  as the number of random vectors $N_{v}$ becomes large.}
The main tool is randomized methods for low
rank matrix decomposition which is briefly introduced in
section~\ref{subsec:randomsvd}.  The spectrum sweeping method is given in
section~\ref{subsec:lralg}, and its complexity is analyzed in
section~\ref{subsec:sscomplexy}.

\subsection{Randomized method for \REV{low rank decomposition of a
numerically low rank matrix}}
\label{subsec:randomsvd}

\REV{Consider a square matrix $P\in \mathbb{C}^{N\times N}$, and denote
by $r$ the rank
of $P$. If $r\ll N$, then $P$ is called a low rank matrix. Many matrices from scientific
and engineering computations may not be exactly low rank but
are close to be a low rank matrix. For such matrices, the concept of
\textit{numerical rank} or \textit{approximate rank} can be introduced,
defined by the closest matrix to $P$ in the sense of the matrix
$2$-norm. More specifically, the numerical $\varepsilon$-rank of
a matrix $P$, denoted by $r_{\varepsilon}$ with respect to the tolerance 
$\varepsilon>0$ is~(see e.g. \cite{GolubVan2013} section 5.4)
\[
r_{\varepsilon} = \min\{\mathrm{rank}(Q):Q\in\mathbb{C}^{N\times N},
\norm{P-Q}_{2}\le \varepsilon\}.
\]
In the following discussion, we simply refer to a matrix
$P$ with numerical $\varepsilon$-rank $r_{\varepsilon}$ as a matrix with
numerical rank $r$. For instance, this applies to the function
$g_{\sigma}(tI-A)$ with small $\sigma$, since the value
$g_{\sigma}(t-\lambda_{j})$ decays fast to $0$ when $\lambda_{j}$ is away from
$t$, and the corresponding contribution to the rank of $g_{\sigma}(tI-A)$
can be neglected up to $\varepsilon$ level. As an example, for the
ModES3D\_4 matrix to be detailed in section~\ref{sec:numer}, if we set
$\sigma=0.01$ and $t=1.0$, then the values
$\{g_{\sigma}(tI-\lambda)\}$
sorted in non-increasing order is given in Fig.~\ref{fig:eigA}, where each
$\lambda$ is an eigenvalue of $A$. If we set
$\varepsilon=10^{-8}$, then the $\varepsilon$-rank of $g_{\sigma}(tI-A)$
is $59$, much smaller than the dimension of the matrix $A$ which is
$64000$.}

\begin{figure}[h]
  \begin{center}
    \includegraphics[width=0.3\textwidth]{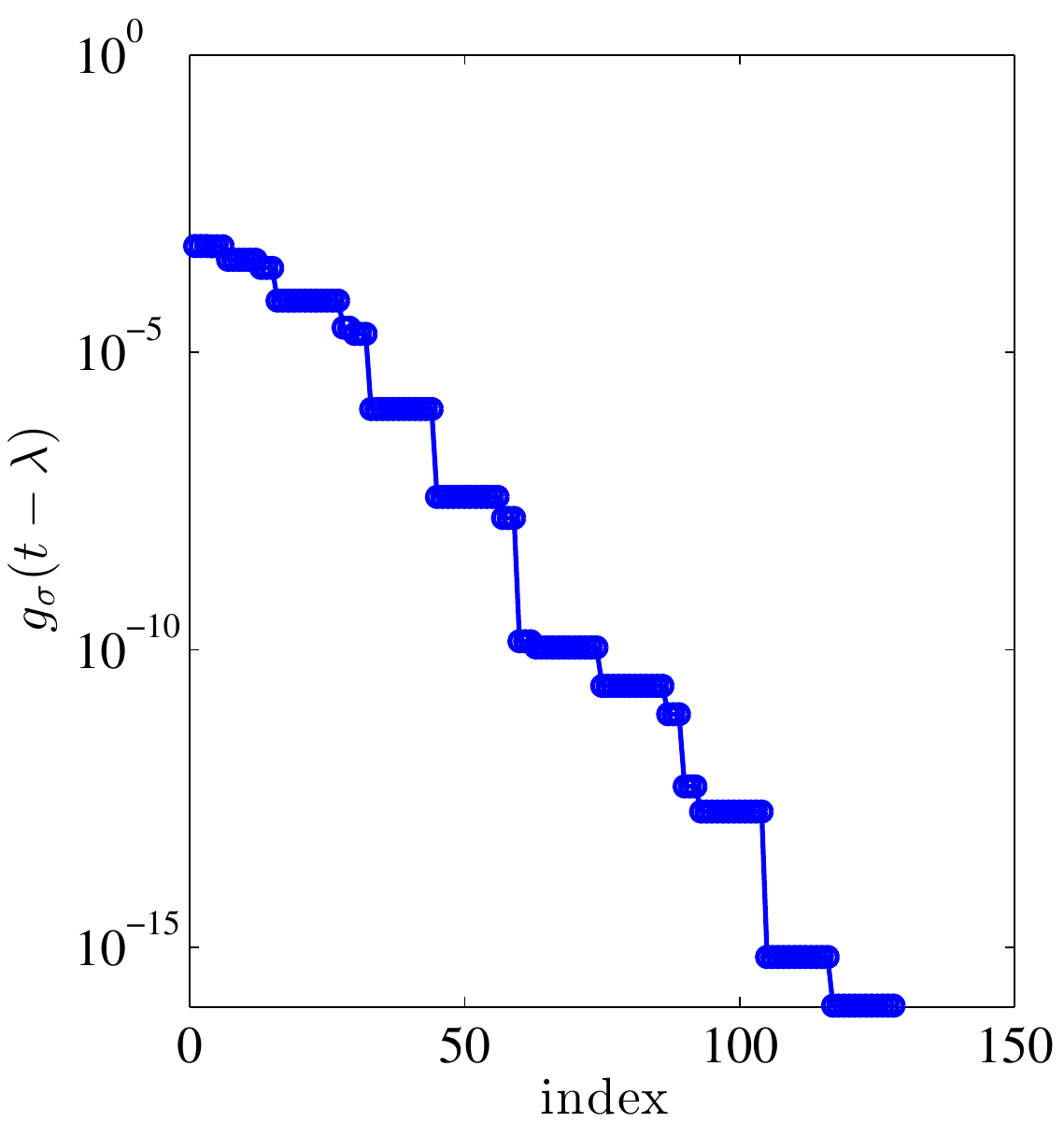}
  \end{center}
  \caption{\REV{For the ModES3D\_4 matrix, the values
  $g_{\sigma}(tI-\lambda)$ sorted in non-increasing order plotted in log
  scale with $\sigma=0.01,t=1.0$. Only the first $128$ values larger
  than $10^{-16}$ are shown.}}
  \label{fig:eigA}
\end{figure}

For a \REV{numerically} low rank matrix, its \REV{approximate} singular value
decomposition can be efficiently evaluated using randomized algorithms
(see
e.g.~\cite{LibertyWoolfeMartinssonEtAl2007,WoolfeLibertyRokhlinEtAl2008,HalkoMartinssonTropp2011}).
The idea is briefly reviewed as below, though presented in a slightly
non-standard way.  \REV{Let $P\in \mathbb{C}^{N\times N}$ be a square
matrix with numerical rank $r \ll N$}, and $W\in \mathbb{R}^{N\times N_{v}}$ be a
random Gaussian matrix. If $N_{v}$ is larger than $r$ by a small
constant, then with high probability, \REV{$P$ projected to the column
space of $PW$ is very close to $P$ in matrix $2$-norm}.
Similarly with high probability, \REV{$P$ projected to the row space
of $W^{*}P$ is very close to $P$ in matrix $2$-norm.}
In the case when $P$ is a Hermitian matrix, only the matrix-vector
multiplication $PW$ is needed.

In order to construct an approximate low rank decomposition of a
Hermitian matrix $P$,
let us denote by $Z=PW$, then an approximate low rank decomposition of
$P$ is given by
\REV{
\begin{equation}
  P \approx ZBZ^{*}.
  \label{eqn:Papprox}
\end{equation}
The matrix $B$ is to be determined and can be computed in several
ways.  One choice of $B$ \REV{can be obtained by requiring
Eq.~\eqref{eqn:Papprox} to hold when applying $W^{*}$ and $W$ to the
both sides of the equation, i.e.}
\[
  K_{W} := W^{*}PW = W^{*} Z \approx (W^{*}Z) B (W^{*}Z)^{*} = (W^{*}Z) B (W^{*}Z).
\]
In the last equality we used that $(W^{*}Z)$ is Hermitian. Hence one can choose
\begin{equation}
  B = (W^{*}Z)^{\dagger} \equiv K_{W}^{\dagger}.
  \label{eqn:Bchoice}
\end{equation}
}
Here $K_{W}^{\dagger}$ is the Moore-Penrose
pseudo-inverse (see e.g.~\cite{GolubVan2013}, section 5.5) of the
matrix $K_{W}$.   In Alg.~\ref{alg:ranlowrank} we summarize the algorithm
for constructing such a low rank decomposition.  

%

\begin{remark}
  In the case when $K_{W}$ is singular, the pseudo-inverse should be
  handled with care.  \REV{This will be discussed in
  section~\ref{subsec:lralg}.
  Besides the choice in Eq.~\eqref{eqn:Bchoice}, another possible choice of the matrix
  $B$ is given by requiring that Eq.~\eqref{eqn:Papprox} holds when
  applying $Z^{*}$ and $Z$ to the both sides of the equation, i.e.
  \[
  Z^{*} P Z \approx (Z^{*}Z) B (Z^{*}Z),
  \]
  and hence one can choose 
  \begin{equation}
    B = (Z^{*}Z)^{\dagger} (Z^{*}PZ) (Z^{*}Z)^{\dagger}.
    \label{eqn:Bchoice2}
  \end{equation}
  Note that Eq.~\eqref{eqn:Bchoice2} can also be derived from a
  minimization problem 
  \[
  \min_{B} \norm{ZBZ^{*}-P}_{F}^2.
  \]
  However, the
  evaluation of $Z^{*}PZ$ is slightly more difficult to compute, since
  the matrix-vector multiplication $PZ$ needs to be further computed.
  In the discussion below
  we will adopt the choice of Eq.~\eqref{eqn:Bchoice}.
  }
\end{remark}

\begin{algorithm}  
\begin{small}
\begin{center} \begin{minipage}{5in} \begin{tabular}{p{0.5in}p{4.5in}} {\bf Input}:  &  \begin{minipage}[t]{4.0in}
  Hermitian matrix $P\in\mathbb{C}^{N\times N}$ with approximate rank $r$;\\
\end{minipage} \\
{\bf Output}:  &  \begin{minipage}[t]{4.0in}
  Approximate low rank decomposition $P\approx ZBZ^{*}$.
\end{minipage} 
\end{tabular}
\begin{algorithmic}[1]
  \STATE Generate a random Gaussian matrix $W\in \mathbb{R}^{N\times
  N_{v}}$ where $N_{v}=r+c$ and $c$ is a small constant.
  \STATE Compute $Z \gets P W$.
  \STATE Form $B = (W^{*}Z)^{\dagger}$.
\end{algorithmic}
\end{minipage}
\end{center}
\end{small}
\caption{Randomized low rank decomposition algorithm.}
\label{alg:ranlowrank}
\end{algorithm}


\subsection{Spectrum sweeping method}\label{subsec:lralg}

The approximate low rank decomposition method can be used to estimate the DOS. Note
that for each $t$, when the regularization parameter $\sigma$ is small
enough, the column space of $g_{\sigma}(tI-A)$ is approximately only
spanned by eigenvectors of $A$ corresponding to eigenvalues near $t$.
Therefore for each $t$, $g_{\sigma}(tI-A)$ is approximately a low rank
matrix. Alg.~\ref{alg:ranlowrank} can be used to construct a low rank
decomposition. \REV{Motivated from the DGC method, we can use the same
random matrix $W$ for all $t$.}
\[
g_{\sigma}(tI-A) \approx Z(t) (W^{*}Z(t))^{\dagger} Z^{*}(t),
\]
and its trace can be \REV{accurately} estimated as
\begin{equation}
  \Tr[g_{\sigma}(tI-A)] \approx  \Tr[(W^{*}Z(t))^{\dagger}
  (Z^{*}(t)Z(t))].
  \label{eqn:tracelr}
\end{equation}
\REV{Here $Z(t)=g^{M}_{\sigma}(tI-A) W$}. We can use a Chebyshev
expansion in Eq.~\eqref{eqn:gcheb} \REV{and compute $g^{M}_{\sigma}(tI-A)$}.  

The Chebyshev expansion requires the calculation of $T_{l}(A)
W,\quad l=0,\ldots,M$. Note
that this does not mean that all $T_{l}(A) W$ need to be stored for all
$l$.  Instead we only need to accumulate $Z(t)$ for each point $t$ that
the DOS is to be evaluated.  $T_{l}(A)$ only need to be applied
to one random $W$ matrix, and we can sweep through the spectrum
of $A$ just via different linear combination of all $T_{l}(A) W$ for
each $t$.  Therefore we refer to the algorithm a ``spectrum sweeping''
method.


As remarked earlier, the pseudo-inverse should be handled with care.
\REV{There are two difficulties associated with the evaluation of $K_{W}^{\dagger}$. First,
it is difficult to know \textit{a priori} the exact number of vectors
$N_{v}$ that should be used at each $t$, and $N_{v}$ should be chosen to
be large enough to achieve an accurate estimation of the DOS.  Hence
the columns of $Z$ are likely to be nearly linearly dependent, and
$K_{W}$ becomes singular. Second, although $g_{\sigma}(tI-A)$ is
by definition a positive semidefinite matrix, the finite term Chebyshev
approximation $g^{M}_{\sigma}(tI-A)$ may not be positive semidefinite
due to the oscillating tail of the Chebyshev polynomial.  
Fig.~\ref{fig:PinvEx} (a) gives an example of such possible failure.
The test matrix is the
ModES3D\_1 matrix to be detailed in section~\ref{sec:numer}. The
parameters are $\sigma=0.05,N_{v}=50$. When computing the
pseudo-inverse, all negative eigenvalues and positive eigenvalues with
magnitude less than $10^{-7}$ times the largest eigenvalue of $K_{W}$
are discarded.  Fig.~\ref{fig:PinvEx} (a) demonstrates that when a
relatively small number of degrees of polynomials $M=400$ is used, 
the treatment of the pseudo-inverse may have large error 
near $t=0.9$. 
This happens mainly when the degrees
of Chebyshev polynomials $M$ is not large enough.  Fig.~\ref{fig:PinvEx}
(b) shows that when $M$ is increased to $800$, the accuracy of the
pseudo-inverse treatment is much
improved.}

\begin{figure}[h]
  \begin{center}
    \subfloat[(a)]{\includegraphics[width=0.4\textwidth]{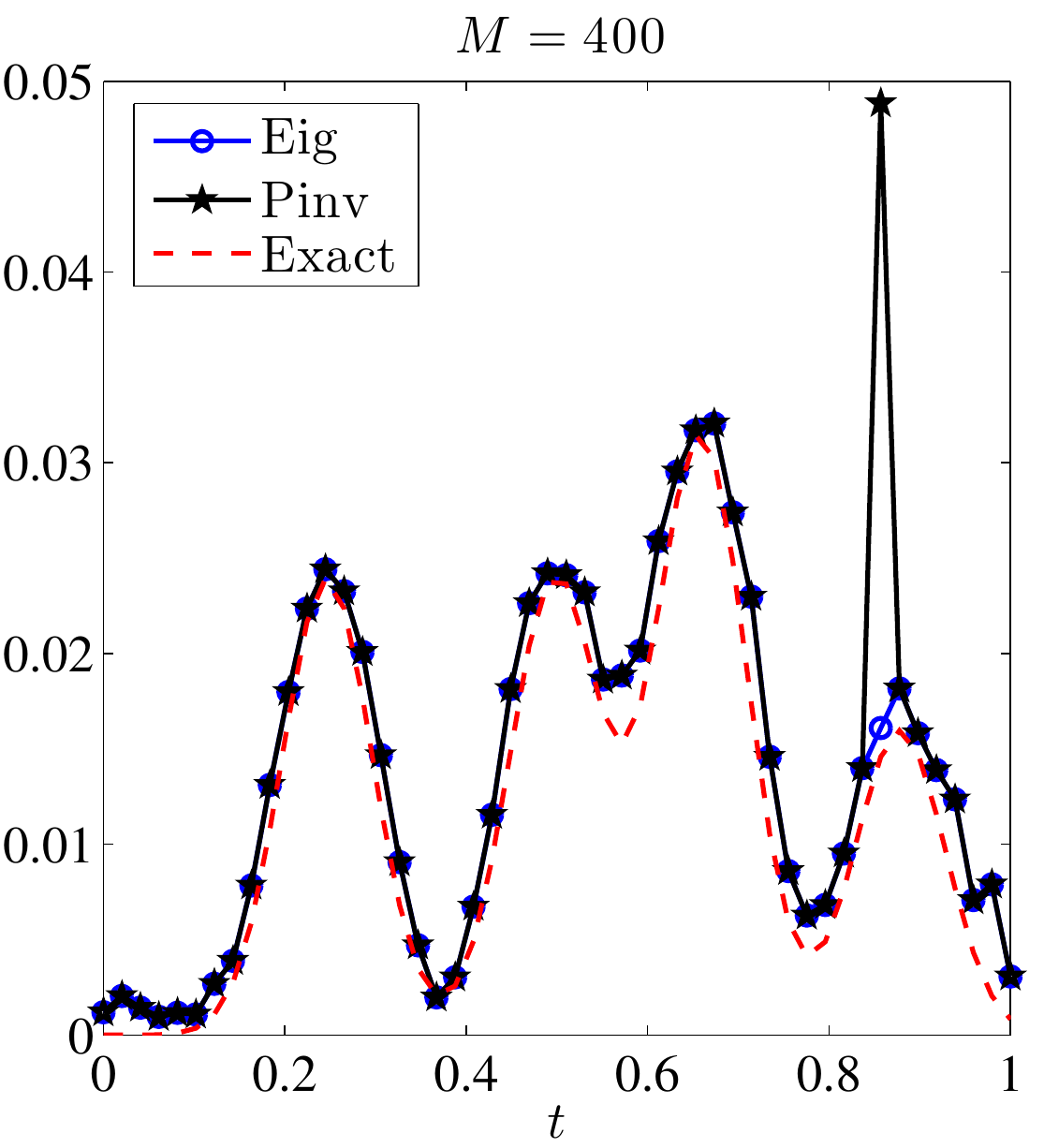}}
    \quad
    \subfloat[(b)]{\includegraphics[width=0.4\textwidth]{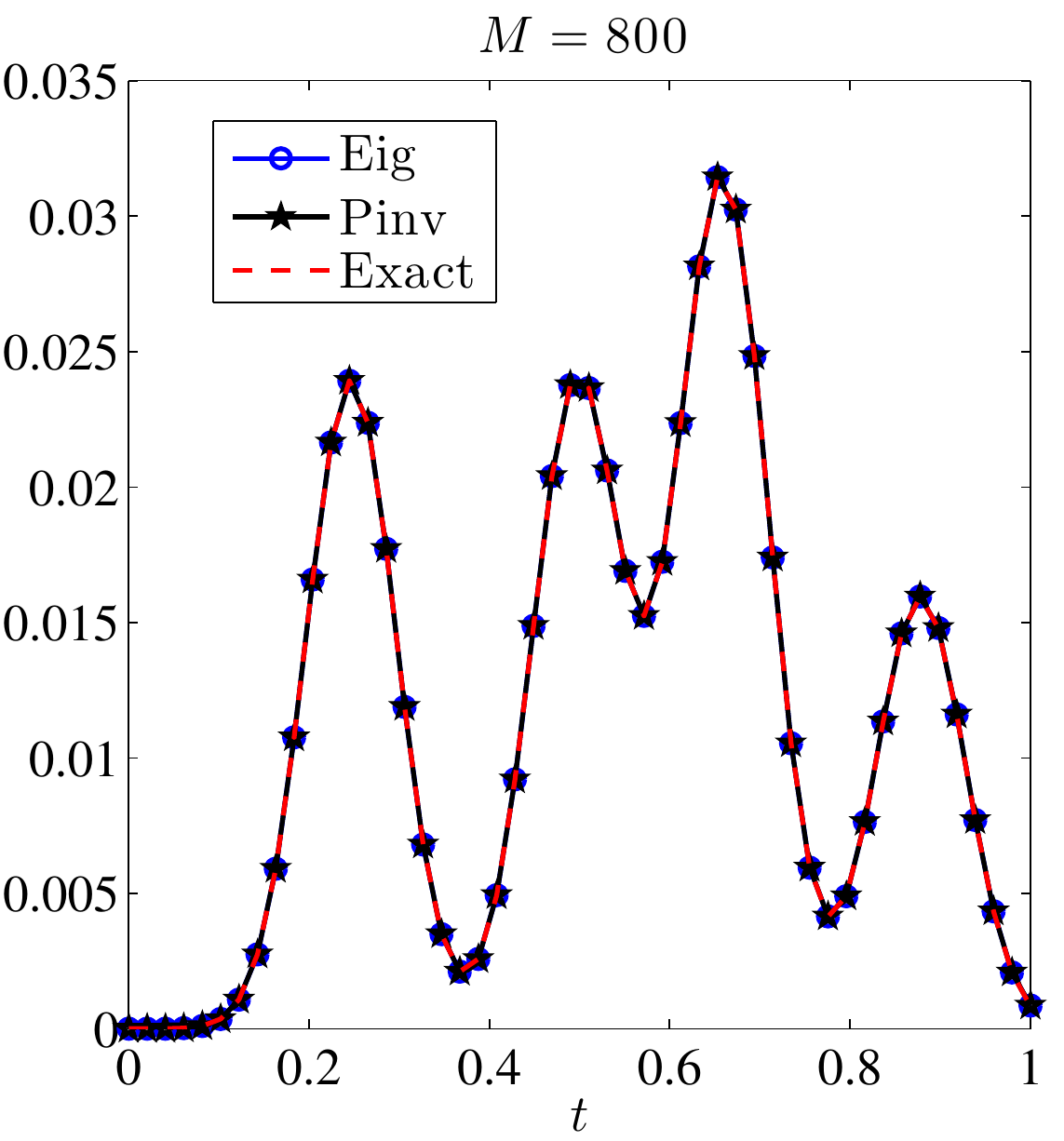}}
  \end{center}
  \caption{For the ModES3D\_1 matrix, compute the DOS using the
  spectrum sweeping method by computing the pseudo-inverse (``Pinv'')
  and by computing the generalized eigenvalue problem (``Eig'') using (a)
  a low degree Chebyshev polynomial $M=400$ (b) a high degree Chebyshev polynomial
  $M=800$.}
  \label{fig:PinvEx}
\end{figure}

\REV{The possible difficulty of the direct evaluation of $K_{W}^{\dagger}$
can be understood as follows. First, Theorem~\ref{thm:correspondence} suggests that for a
strictly low rank matrix $P$, there is correspondence between the trace of
the form in Eq.~\eqref{eqn:tracelr} and the solution of a generalized
eigenvalue problem.}

\begin{theorem}\label{thm:correspondence}
  Let $P\in \mathbb{C}^{N\times N}$ be a Hermitian matrix with rank
  $r \ll N$ and with eigen decomposition
  \[
  P = U S U^{*}.
  \]
  Here $U\in\mathbb{C}^{N\times r}$ and $U^{*} U = I$.
  $S\in\mathbb{R}^{r\times r}$ is a real
  diagonal matrix containing the nonzero eigenvalues of $P$. For
  $W\in\mathbb{C}^{N\times p}$ ($p>r$) and assume $W^{*}U$ is a matrix
  with linearly independent columns, then $S$ can be recovered through the generalized
  eigenvalue problem
  \begin{equation}
    (W^{*}P^{2} W) C = (W^{*}P W) C \REV{\Xi}.
    \label{eqn:generalev}
  \end{equation}
  Here $C\in \mathbb{C}^{p\times r}$ and \REV{$\Xi\in \mathbb{R}^{r\times
  r}$} is a diagonal matrix with diagonal entries equal to those of $S$
  up to reordering.  Furthermore, 
  \begin{equation}
    \Tr\left[ (W^{*}PW)^{\dagger}(W^{*}P^2W) \right] = \Tr[S] =
    \Tr[P].
    \label{eqn:generaltrace}
  \end{equation}
\end{theorem}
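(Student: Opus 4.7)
The plan is to exploit the factorization induced by the eigendecomposition. Write $Y := W^{*} U \in \mathbb{C}^{p\times r}$; by hypothesis $Y$ has linearly independent columns, i.e.\ $Y$ has full column rank $r$. Substituting $P = USU^{*}$ gives the key identities
\begin{equation*}
W^{*}PW = Y S Y^{*}, \qquad W^{*}P^{2}W = Y S^{2} Y^{*},
\end{equation*}
so both matrices have range equal to $\ran(Y)$ and both have rank $r$ (since $S$ is invertible on $\mathbb{C}^{r}$). This reduces everything to a statement about the matrices $YSY^{*}$ and $YS^{2}Y^{*}$.

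For the generalized eigenvalue problem~\eqref{eqn:generalev}, I would first note that $Y^{*}:\mathbb{C}^{p}\to \mathbb{C}^{r}$ is surjective (being the adjoint of an injective map of full column rank), so for each standard basis vector $e_{i}\in\mathbb{C}^{r}$ there exists $c_{i}\in\mathbb{C}^{p}$ with $Y^{*}c_{i} = e_{i}$. Then
\begin{equation*}
(W^{*}P^{2}W)c_{i} = Y S^{2} e_{i} = s_{i}^{2}\, Y e_{i}, \qquad (W^{*}PW)c_{i} = Y S e_{i} = s_{i}\, Y e_{i}.
\end{equation*}
Since $Ye_{i}\neq 0$ (as $Y$ has full column rank) and $s_{i}\neq 0$ (as $S$ contains only the nonzero eigenvalues), we obtain $(W^{*}P^{2}W)c_{i} = s_{i}\,(W^{*}PW)c_{i}$, i.e.\ $s_{i}$ is a generalized eigenvalue with eigenvector $c_{i}$. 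The $c_{i}$ are linearly independent because the $e_{i}=Y^{*}c_{i}$ are. Collecting the $c_{i}$ as columns of $C$ and the $s_{i}$ on the diagonal of $\Xi$ gives~\eqref{eqn:generalev}.

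For the trace identity~\eqref{eqn:generaltrace}, I would compute $(YSY^{*})^{\dagger}$ explicitly using a thin QR factorization $Y = QR$, where $Q\in\mathbb{C}^{p\times r}$ has orthonormal columns and $R\in\mathbb{C}^{r\times r}$ is invertible. Then $YSY^{*} = Q(RSR^{*})Q^{*}$ with $RSR^{*}$ an invertible Hermitian $r\times r$ matrix, so the standard pseudoinverse formula gives $(YSY^{*})^{\dagger} = Q(RSR^{*})^{-1}Q^{*}$. Multiplying and using $Q^{*}Q = I$ and the cyclic property of the trace,
\begin{equation*}
\Tr\bigl[(YSY^{*})^{\dagger}(YS^{2}Y^{*})\bigr]
= \Tr\bigl[(RSR^{*})^{-1}(RS^{2}R^{*})\bigr]
= \Tr\bigl[R^{-*}SR^{*}\bigr]
= \Tr[S],
\end{equation*}
and $\Tr[S] = \Tr[P]$ follows from $P = USU^{*}$ with $U^{*}U = I$ together with the fact that the zero eigenvalues of $P$ contribute nothing.

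The main conceptual hurdle is handling the rank-deficiency cleanly: naively, $W^{*}PW$ is a singular $p\times p$ matrix, so one must interpret $\Xi$ only on the $r$-dimensional range and be careful that the pseudoinverse in~\eqref{eqn:generaltrace} is taken on the $p\times p$ matrix, not on the reduced $r\times r$ block. The QR reduction above is precisely what isolates the invertible compressed block and lets both computations go through without ambiguity; no further technical obstacle arises.
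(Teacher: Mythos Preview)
Your proof is correct and follows essentially the same route as the paper's: both reduce everything to the factorization $W^{*}PW = YSY^{*}$, $W^{*}P^{2}W = YS^{2}Y^{*}$ with $Y = W^{*}U$ of full column rank, and then exploit invertibility of $S$ on the $r$-dimensional range. The only differences are cosmetic --- you construct the generalized eigenvectors explicitly as preimages of the standard basis under $Y^{*}$ and obtain the pseudoinverse via a thin QR of $Y$, whereas the paper invokes the identity $(YSY^{*})^{\dagger} = (Y^{*})^{\dagger} S^{-1} Y^{\dagger}$ directly and argues the eigenvalue correspondence from the equation itself.
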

\begin{proof}
  Using the eigen decomposition of $P$,
  \[
  (W^{*}P^{2} W) C  = (W^{*} U) S^{2} (U^{*} W C), \quad (W^{*}P W) C
  \REV{\Xi} =
  (W^{*}U) S (U^{*} W C) \REV{\Xi}.
  \]
  Since $W^{*}U\in \mathbb{C}^{p\times r}$ is a matrix with linearly
  independent columns, we have
  \[
  S^{2} (U^{*}W C) = S (U^{*}W C)  \REV{\Xi},
  \]
  or equivalently
  \[
  S (U^{*}W C) = (U^{*}W C) \REV{\Xi}.
  \]
  Since $S$ is a diagonal matrix we have $S=\Xi$ up to reordering of
  diagonal entries.

  To prove Eq.~\eqref{eqn:generaltrace}, note that
	\[
	(W^{*} PW)^{\dagger} = (U^{*}W)^{\dagger} S^{-1}(W^{*}U)^{\dagger}.
	\]
	Therefore
	\[
  \begin{split}
	\Tr\left[(W^{*}PW)^{\dagger}(W^{*}P^2W)\right] =&
	\Tr\left[(U^{*}W)^{\dagger} S^{-1}(W^{*}U)^{\dagger}
  (W^{*}U) S^{2} (U^{*}W)\right] \\
  =& \Tr[S] = \Tr[USU^{*}] = \Tr[P].
  \end{split}
	\]
\end{proof}

\REV{
Although Theorem~\ref{thm:correspondence} is stated for exactly low rank
matrices, it provides an practical criterion for removing some of
the large, spurious contribution to the DOS such as in
Fig.~\ref{fig:PinvEx} (a). 
For $g_{\sigma}(tI-A)$ which is numerically
low rank, only the generalized eigenvalues within the range of
$g$, i.e. the interval $[0,\frac{1}{N\sqrt{2\pi\sigma^2}}]$ should be
selected.  This motivated the use of Alg.~\ref{alg:geneig} to solve the
generalized eigenvalue problem 
\begin{equation}
  Z^{*}Z \wt{C} = (W^{*}Z) \wt{C} \wt{\Xi}.
  \label{eqn:geneig}
\end{equation}
where $\wt{\Xi}$ is a diagonal matrix only containing the generalized
eigenvalues in the interval $[0,\frac{1}{N\sqrt{2\pi\sigma^2}}]$.
Alg.~\ref{alg:geneig} only keeps the generalized eigenvalues within the
possible range of $g_{\sigma}$. Our numerical experience indicates that
this procedure is more stable than the direct treatment of the
pseudo-inverse.  The algorithm of the spectrum sweeping method is given
in Alg. ~\ref{alg:rlrdos}.}

\REV{Now consider the problematic point when using the pseudo-inverse in
Fig.~\ref{fig:PinvEx} (a).  We find that the generalized eigenvalues
$\Xi$  at that problematic point has one generalized eigenvalue $0.033$, exceeding
the maximally allowed range $\frac{1}{N\sqrt{2\pi\sigma^2}}=0.008$.
After removing this generalized eigenpair, the error of the DOS obtained by
solving the generalized eigenvalue problem becomes smaller. Again when
the degree of the Chebyshev polynomial $M$ increases sufficiently large to $800$, the
accuracy of the generalized eigenvalue formulation also improves, and the result obtained by
using the pseudo-inverse and by using the generalized eigenvalue problem
agrees with each other, as illustrated in Fig.~\ref{fig:PinvEx} (b). In
such case, all generalized eigenvalues fall into the
range $[0,\frac{1}{N\sqrt{2\pi\sigma^2}}]$.}

\begin{algorithm}  
\begin{small}
\begin{center}
  \begin{minipage}{5in}
\begin{tabular}{p{0.5in}p{4.5in}}
{\bf Input}:  &  \begin{minipage}[t]{4.0in}
  Matrices $Z,W\in \mathbb{C}^{N\times N_{v}}$;\\
  Smearing parameter $\sigma$;\\
  Truncation parameter $\tau$.
\end{minipage} \\
{\bf Output}:  &  \begin{minipage}[t]{4.0in}
  Generalized eigenvalues $\wt{\Xi}$ and generalized eigenvectors
  $\wt{C}$.
\end{minipage} 
\end{tabular}
\begin{algorithmic}[1]
  \STATE Compute the eigenvalue decomposition of the matrix
  \[
  W^{*}Z = USU^{*}.
  \]
  $S$ is a diagonal matrix with diagonal entries $\{s_{i}\}$.
  \STATE Let $\wt{S}$ be a diagonal matrix with all eigenvalues
  $s_{j}\ge \tau \max_{i} s_{i}$, and $\wt{U}$ be the corresponding eigenvectors.
  \STATE Solve the standard eigenvalue problem
  \[
  \wt{S}^{-\frac12} \wt{U}^{*} Z^{*} Z \wt{U} \wt{S}^{-\frac12} X = X
  \Xi.
  \]
  $\Xi$ is a diagonal matrix with diagonal entries $\{\xi_{i}\}$.
  \STATE Let $\wt{\Xi}$ be a diagonal matrix containing the generalized
  eigenvalues $\xi_{i}\in [0,\frac{1}{N\sqrt{2\pi\sigma^2}}]$, and
  $\wt{X}$ be the corresponding eigenvectors.
  \STATE Compute the generalized eigenvectors
  $\wt{C}=\wt{U}\wt{S}^{-\frac12}\wt{X}$.
\end{algorithmic}
\end{minipage}
\end{center}
\end{small}
\caption{\REV{Solve the generalized eigenvalue problem for the spectrum sweeping
method.}}
\label{alg:geneig}
\end{algorithm}

\begin{algorithm}  
\begin{small}
\begin{center}
  \begin{minipage}{5in}
\begin{tabular}{p{0.5in}p{4.5in}}
{\bf Input}:  &  \begin{minipage}[t]{4.0in}
  Hermitian matrix $A$ with eigenvalues between $(-1,1)$;\\
  A set of points $\{t_{i}\}_{i=1}^{N_{t}}$ at which the DOS is to be evaluated;\\
  Polynomial degree $M$;
  Smearing parameter $\sigma$;\\
  Number of random vectors $N_{v}$.
\end{minipage} \\
{\bf Output}:  &  \begin{minipage}[t]{4.0in}
  Approximate DOS $\{\wt{\phi}_{\sigma}(t_i)\}$.
\end{minipage} 
\end{tabular}
\begin{algorithmic}[1]
  \STATE Compute the coefficient $\{\mu_{l}(t_{i})\}_{l=0}^{M}$ for each
  $t_{i},i=1,\ldots,N_{t}$ using Alg.~\ref{alg:dgcexpansion} with
  $f(x)= g_{\sigma}(x-t_i)$.
  \STATE Generate a random Gaussian matrix $W\in \mathbb{R}^{N\times
  N_{v}}$.
  \STATE Initialize the three term recurrence matrices 
  $V_{m},V_{p}\gets \mathbf{0}\in \mathbb{C}^{N\times N_{v}},V_{c}\gets
  W$.
  \STATE Initialize $Z(t_{i})\gets \mathbf{0}\in \mathbb{C}^{N\times
  N_{v}}, i=1,\ldots,N_{t}$.
  \FOR {$l=0,\ldots,M$}
  \FOR {$i=1,\ldots, N_{t}$}
  \STATE $Z(t_{i})\gets Z(t_i) + \mu_{l}(t_{i}) V_{c}$.
  \ENDFOR
  \STATE $V_{p}\gets (2-\delta_{l0}) A V_{c} - V_{m}$.
  \STATE $V_{m}\gets V_{c}, V_{c}\gets V_{p}$.
  \ENDFOR
  \FOR {$i=1,\ldots, N_{t}$}
  \STATE \REV{Compute $\wt{\phi}_{\sigma}(t_{i}) = \Tr[\wt{\Xi}(t_i)]$ where
  $\wt{\Xi}(t_i)$ is a diagonal matrix obtained by solving the generalized
  eigenvalue problem 
  \[
  Z^{*}(t_{i}) Z(t_{i}) \wt{C}(t_i) = W^{*}(t_{i}) Z(t_{i}) \wt{C}(t_i)
  \wt{\Xi}(t_i)
  \]
  using Alg.~\ref{alg:geneig}}.
  \ENDFOR
\end{algorithmic}
\end{minipage}
\end{center}
\end{small}
\caption{Spectrum sweeping method using the Delta-Gauss-Chebyshev expansion (SS-DGC) for estimating the DOS.} \label{alg:rlrdos}
\end{algorithm}

\REV{The SS-DGC method can be significantly more accurate compared to the DGC method.
This is because the spectrum sweeping method takes
advantage that different columns of $Z\equiv
g_{\sigma}(tI-A)W$ are correlated: The information in different columns
of $Z$ saturates as $N_{v}$ increases beyond the numerical rank of
$g_{\sigma}(tI-A)$, and the columns 
of $Z$ become increasingly linearly dependent.
Comparatively Hutchinson's method neglects such correlated information,
and the asymptotic convergence rate is only $\Or(N_{v}^{-\frac12})$.
}
%

\subsection{Complexity}\label{subsec:sscomplexy}

The complexity of the DOS estimation is certainly problem dependent.  In
order to measure the asymptotic complexity of Alg.~\ref{alg:rlrdos}
for a series of matrices with increasing value of $N$, we consider
a series of matrices are \textit{spectrally uniformly distributed}, i.e.
the spectral width of each matrix is bounded between $(-1,1)$, and the number of
eigenvalues in any interval $(t_{1},t_{2})$ is proportional to $N$. In
other words, we do not consider the case when the eigenvalues can
asymptotically be concentrated into one point. \REV{In the complexity
analysis below, we neglect any contribution on the order of $\log N$.}
In section~\ref{sec:robustdoslowrank} we will give a detailed example
for which the assumption is approximately satisfied. 

Alg.~\ref{alg:rlrdos} scales as $\Or(N_{v}^3)$ with respect to the
number of random vectors $N_{v}$.  Therefore it can be less efficient to
let $N_{v}$ grow proportionally to the matrix size $N$.  Instead it is
possible to choose $N_{v}$ to be a constant, and to choose the regularization
parameter $\sigma$ to be $\Or(N^{-1})$ so that $g_{\sigma}(tI-A)$ is a
matrix of bounded \REV{numerical} rank as $N$ increases.  Eq.~\eqref{eqn:chebyaccuracy}
then states that the Chebyshev polynomial degree $M$ should be $\Or(N)$.
We denote by $c_{\mathrm{matvec}}$ the cost of each matrix-vector
multiplication (matvec). We assume $c_{\mathrm{matvec}}\sim \Or(N)$.  We
also assume that $N_{v}$ is kept to be a constant and is omitted in the
asymptotic complexity count with respect to $N$  and $N_{t}$.

Under the assumption of spectrally uniformly
distributed matrices, the computational cost for applying the Chebyshev
polynomial to the random matrix $W$ is $c_{\mathrm{matvec}} M N_{v}\sim
\Or(N^2)$.  The cost for updating all $\{Z(t_{i})\}$ is $N_{t} M N
N_{v}\sim \Or(N^2 N_{t})$.  The cost for computing the DOS by trace
estimation is $\Or(N_{t} N N_{v}^2 + N_{t} N_{v}^3)\sim \Or(N_{t} N)$.
So the total cost is $\Or( N^2 N_{t} )$.

The memory cost is dominated by the storage of the matrices
$\{Z(t_{i})\}$, which scales as $\Or(N N_{t} N_{v})\sim \Or(N N_{t})$.  

If the DOS is evaluated at a few points with $N_{t}$ being small, the
spectrum sweeping method is very efficient.  However, in some cases such
as the trace estimation as discussed in section~\ref{sec:traceest},
$N_{t}$ should be chosen to be $\Or(N)$.  Therefore the computational
cost of SS-DGC is $\Or(N^3)$ and the memory cost is $\Or(N^2)$.  This is
undesirable and can be improved as in the next section with a more
efficient implementation.

%

\section{A robust and efficient implementation of the spectrum sweeping
method}\label{sec:robustdoslowrank}

%
%

The SS-DGC method can give very accurate estimation of the
DOS.  However, it also has two notable disadvantages:

\begin{enumerate}
  \item The SS-DGC method requires a rough estimate of the random
    vectors $N_{v}$.  If the number of random vectors $N_{v}$ is less
    than the \REV{numerical} rank of $g_{\sigma}(tI-A)$, then the estimated DOS will has
    $\Or(1)$ error. 
  \item The SS-DGC method requires the formation of the $Z(t)$ matrix
    for each point $t$.  The computational cost scales as
    $\Or(N^2 N_t)$ and the memory cost is $\Or(N N_t)$.  This is
    expensive if the number of points $N_{t}$ is large. 
\end{enumerate}

In this section we provide a more robust and efficient implementation of
the spectrum sweeping (RESS) method  to overcome the two problems above.
The main idea of the RESS-DGC method is to have

\begin{enumerate}
  \item a hybrid strategy for robust estimation of the DOS in the case
    when the number of vectors $N_{v}$ is insufficient at some points
    with at least $\Or(1/\sqrt{N_{v}})$ accuracy.
  \item a consistent method for directly computing of the matrix
    $Z^{*}(t_{i})Z(t_i)$ for each point $t_i$ and avoiding
    the computation and storage of $Z(t_{i})$.
\end{enumerate}

\subsection{A robust and efficient implementation for estimating the
trace of a \REV{numerically} low rank
matrix}

Given a \REV{numerically} low rank matrix $P$, we may apply
Alg.~\ref{alg:ranlowrank} to compute its low rank approximation using a
random matrix of size $N\times N_{v}$.  Let us denote the residual by
\begin{equation}
  R := P - Z B Z^{*}.
\end{equation}
If $N_{v}$ is large enough, then $\norm{R}_{F}$ should be very close to
zero.  Otherwise, Hutchinson's method can be used to estimate the trace
of $R$ as the correction for the trace of $P$. According to
Theorem~\ref{thm:EVarTrace}, if $ZBZ^{*}$ is relatively a good
approximation to $P$, the variance for estimating $\Tr[R]$ can be
significantly reduced.



To do this, we use another set of random vectors $\wt{W}\in
\mathbb{C}^{N\times \wt{N}_{v}}$, and 
\begin{equation}
  \Tr[R] \approx \frac{1}{\wt{N}_{v}} \Tr[\wt{W}^{*} R \wt{W}]
  = \frac{1}{\wt{N}_{v}}\left(\Tr[\wt{W}^{*} P \wt{W}]  -
  \Tr[(\wt{W}^{*} Z) B (Z^{*} \wt{W})]\right).
  \label{eqn:hybridcorrection}
\end{equation}


Eq.~\eqref{eqn:hybridcorrection} still requires the storage of
$Z$.  As explained above, storing $Z$ can become expensive if we 
use the same set of random vectors $W$ and $\wt{W}$ but evaluate the
DOS at a large number of points $\{t_{i}\}$.  In order to reduce such cost, note
that
\[
Z^{*}Z = W^{*}P^{*} PZ = W^{*}(P^2 W).
\]
If we can compute both $PW$ and $P^2 W$, then $Z$ does
not need to be explicitly stored.  Instead we only need to store
\[
K_{W} = W^{*} (PW),\quad  K_{Z} = W^{*} (P^2 W).
\]
\REV{This observation is used} in section~\ref{subsec:robustdos}, where $PW$ and
$P^2W$ are computed separately with Chebyshev expansion.

Similarly, for the computation of the correction
term~\eqref{eqn:hybridcorrection}, we can directly compute the cross
term due to $\wt{W}$ as
\[
K_{C} = \wt{W}^{*} Z = \wt{W}^{*} (P W), \quad K_{\wt{W}} = \wt{W}^{*} (P\wt{W}).
\]

Alg.~\ref{alg:ranlowrank} involves the computation of the pseudo-inverse
of $K_{W}$, which is in practice computed by solving a generalized eigenvalue problem
\REV{using Alg.~\ref{alg:geneig}.}
\REV{
The hybrid strategy in Eq.~\eqref{eqn:hybridcorrection} is 
consistent with the generalized eigenvalue problem, in the sense that}    
\begin{equation}
  \mathbb{E}_{\wt{w}} [\wt{w}^* Z \wt{C}\wt{C}^{*} Z^{*} \wt{w}] =
  \Tr[Z \wt{C}\wt{C}^{*}
  Z^{*}] = \Tr[\wt{C}^{*}(Z^{*}Z) \wt{C}] = \Tr[\wt{\Xi}].
  \label{eqn:consistentlowrank}
\end{equation}
The last equality of Eq.~\eqref{eqn:consistentlowrank} follows from that
$\wt{\Xi},\wt{C}$ solve the generalized eigenvalue problem as in
Alg.~\ref{alg:geneig}.


In summary, a robust and efficient randomized method for estimating the
trace of a low rank matrix is given in Alg.~\ref{alg:robustrantrace}.

\begin{algorithm}
\begin{small}
\begin{center}
  \begin{minipage}{5in}
\begin{tabular}{p{0.5in}p{4.5in}}
{\bf Input}:  &  \begin{minipage}[t]{4.0in}
  Hermitian matrix $P\in\mathbb{C}^{N\times N}$;
  Number of randomized vectors $N_{v},\wt{N}_{v}$\\
\end{minipage} \\
{\bf Output}:  &  \begin{minipage}[t]{4.0in}
  Estimated $\Tr[P]$.
\end{minipage} 
\end{tabular}
\begin{algorithmic}[1]
  \STATE Generate random Gaussian matrices $W\in \mathbb{R}^{N\times
  N_{v}}$ and $\wt{W}\in \mathbb{C}^{N\times \wt{N}_{v}}$.
  \STATE Compute $K_{W}=W^{*} (P W),\quad K_{Z}=W^{*} (P^2 W)$.
  \STATE Compute $K_{C}=\wt{W}^{*} (P W), K_{\wt{W}}=\wt{W}^{*} (P\wt{W})$.
  \STATE Solve the generalized eigenvalue problem
  \[
  K_{Z} \wt{C} = K_{W} \wt{C} \REV{\wt{\Xi}},
  \]
  \REV{using Alg.~\ref{alg:geneig}}.
  \STATE Compute the trace 
  \[ 
  \Tr[P]\approx \Tr[\wt{\Xi}] + \frac{1}{\wt{N}_{v}} \left(
  \Tr[K_{\wt{W}}] - \Tr[K_{C}\wt{C}\wt{C}^{*}K_{C}^{*}]
  \right).
  \] 
\end{algorithmic}
\end{minipage}
\end{center}
\end{small}
\caption{Robust and efficient randomized method for estimating the trace
of a \REV{numerically} low rank matrix.}
\label{alg:robustrantrace}
\end{algorithm}



\subsection{Robust and efficient implementation of the spectrum sweeping
method}\label{subsec:robustdos}

%


In order to combine Alg.~\ref{alg:robustrantrace} and
Alg.~\ref{alg:rlrdos} to obtain a robust and efficient implementation of
the spectrum sweeping method, it is necessary to evaluate
\REV{$(g_{\sigma}(tI-A))^2 W$}, where
\[
\REV{(g_{\sigma}(tI-A))^2} \equiv \frac{1}{N^2 2\pi\sigma^2}
e^{-\frac{(tI-A)^2}{\sigma^2}}.
\]
\REV{In order to do this,} one can directly compute
\REV{$(g_{\sigma}(tI-A))^2$ using  an auxiliary Chebyshev expansion as
follows} 
\begin{equation}
  (g_{\sigma}(tI-A))^2 \approx \sum_{l=0}^{M} \nu_{l}(t) T_{l}(A).
  \label{eqn:g2cheb}
\end{equation}
Proposition~\ref{prop:aliasingremoval} states that if the expansion is
chosen carefully, the Chebyshev expansion~\eqref{eqn:gcheb}
and~\eqref{eqn:g2cheb} are fully consistent, \REV{i.e. if $g_{\sigma}$
is expanded by a Chebyshev polynomial expansion of degree
$M/2$ denoted by $g_{\sigma}^{M/2}$, we can expand
$(g_{\sigma})^2$ using a Chebyshev polynomial expansion of degree $M$
denoted by $\wt{g}_{\sigma}^{M}$. These two expansions are consistent in the
sense that $(g_{\sigma}^{M/2})^2 =
\wt{g}_{\sigma}^{M}$.}

\begin{prop}
  Let $M$ be an even integer, and $P$ is a matrix polynomial function of
  $A$
  \[
  P = \sum_{l=0}^{M/2} \mu_{l} T_{l}(A).
  \]
  Then 
  \begin{equation}
    P^2 = \sum_{l=0}^{M} \nu_{l} T_{l}(A),
    \label{eqn:polychebsquare}
  \end{equation}
  where
  \[
  \nu_{l} = \frac{2-\delta_{l0}}{\pi}
  \int_{-1}^{1}\frac{1}{\sqrt{1-x^2}} T_{l}(x) \left( \sum_{k=0}^{M/2}
  \mu_{k} T_{k}(x) \right)^2 \REV{\ud x}, \quad l=0,\ldots,M.
  \]
  \label{prop:aliasingremoval}
\end{prop}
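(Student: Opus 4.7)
The plan is to reduce the matrix identity to a scalar polynomial identity on $[-1,1]$ via functional calculus, and then invoke the orthogonality of Chebyshev polynomials with respect to the weight $(1-x^2)^{-1/2}$.

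First I would observe that both sides of Eq.~\eqref{eqn:polychebsquare} are polynomials in $A$. Since $A$ is Hermitian with spectrum in $(-1,1)$, it suffices to prove the corresponding scalar identity
\[
\left(\sum_{k=0}^{M/2} \mu_{k} T_{k}(x)\right)^{2} = \sum_{l=0}^{M} \nu_{l} T_{l}(x), \qquad x\in[-1,1],
\]
because substituting $A$ then yields the claim via the spectral theorem (the two sides agree as functions on $\spec(A)\subset(-1,1)$).

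Next, let $q(x) := \left(\sum_{k=0}^{M/2} \mu_{k} T_{k}(x)\right)^{2}$. Using the product formula $T_{j}(x)T_{k}(x) = \tfrac{1}{2}\bigl(T_{j+k}(x) + T_{|j-k|}(x)\bigr)$ it is clear that $q$ is a polynomial in $x$ of degree at most $M$. Since $\{T_{l}\}_{l=0}^{M}$ is a basis of the space of polynomials of degree $\le M$, we may write $q(x) = \sum_{l=0}^{M} \nu_{l}\,T_{l}(x)$ uniquely. I would then extract the coefficients $\nu_{l}$ by invoking the orthogonality relation
\[
\int_{-1}^{1} \frac{T_{l}(x)\,T_{m}(x)}{\sqrt{1-x^{2}}}\,\ud x = \begin{cases} \pi, & l=m=0,\\ \pi/2, & l=m\ne 0,\\ 0, & l\ne m, \end{cases}
\]
which, upon multiplying the expansion of $q$ by $T_{l}(x)/\sqrt{1-x^{2}}$ and integrating, gives exactly the stated formula
\[
\nu_{l} = \frac{2-\delta_{l0}}{\pi}\int_{-1}^{1}\frac{1}{\sqrt{1-x^{2}}}\,T_{l}(x)\,q(x)\,\ud x.
\]

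There is essentially no real obstacle here: the argument is a clean two-step reduction (scalar identity via orthogonality, then functional calculus). The only point worth being careful about is ensuring that the truncation at $l=M$ in the sum is correct, i.e.\ that no higher Chebyshev modes are generated; this is immediate because the product of two polynomials of degree $\le M/2$ has degree $\le M$, so $\nu_{l}=0$ for $l>M$ automatically and the integral formula gives the full expansion without aliasing. This is precisely the reason the expansions~\eqref{eqn:gcheb} and~\eqref{eqn:g2cheb} are consistent when $M$ is chosen as in the proposition.
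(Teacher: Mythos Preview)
Your proposal is correct and follows essentially the same approach as the paper: both observe that the product of two polynomials of degree at most $M/2$ has degree at most $M$, hence admits a Chebyshev expansion of the form~\eqref{eqn:polychebsquare}, with the coefficient formula coming from orthogonality. Your write-up is in fact more detailed than the paper's, which leaves the extraction of the $\nu_{l}$ via the orthogonality relation implicit.
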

\begin{proof}
  Using the definition of $P$,
  \[
  P^2 = \sum_{p,q=0}^{M/2} \mu_{p} \mu_{q} T_{p}(A) T_{q}(A).
  \]
  Since $0\le p+q\le M$, $P^2$ is a polynomial of $A$ up to degree
  $M$, and can be expanded using a Chebyshev polynomial of the
  form~\eqref{eqn:polychebsquare}.  
\end{proof}

%
%

\REV{The expansion coefficient
$\nu_{l}$'s can be obtained using Alg.~\ref{alg:dgcexpansion} with
numerical integration, and} we have a consistent and efficient
method for estimating the DOS.
\begin{theorem}
  Let $A\in \mathbb{C}^{N\times N}$ be a Hermitian matrix, 
  and $W\in\mathbb{R}^{N\times N_{v}}$ be a random Gaussian matrix. 
  For any $t\in (-1,1)$, let $g^{M/2}_{\sigma}(tI-A)$ be the
  $M/2$ degree Chebyshev expansion
  \begin{equation}
    g^{M/2}_{\sigma}(tI-A) = \sum_{l=0}^{M/2} \mu_{l}(t)
    T_{l}(A),
    \label{eqn:gchebM2}
  \end{equation}
  and the coefficients $\{\nu_{l}\}_{l=0}^{M}$ are defined according to
  Proposition~\ref{prop:aliasingremoval}.  
  Define $Z(t) = g^{M/2}_{\sigma}(tI-A) W$, and
  \begin{equation}
    K_{W}(t) = W^{*} Z(t), \quad K_{Z}(t) = W^{*}\left( \sum_{l=0}^{M}
    \nu_{l}(t) T_{l}(A) W\right).
    \label{eqn:KWKZ}
  \end{equation}
  Let $\wt{w}$ be a random Gaussian vector, and
  \begin{equation}
    K_{C}(t) = \wt{w}^{*} Z(t), \quad K_{\wt{W}}(t) = \wt{w}^{*}
    g^{M/2}_{\sigma}(tI-A) \wt{w}.
    \label{}
  \end{equation}
  Then 
  \begin{equation}
    \Tr\left[ g^{M/2}_{\sigma}(tI-A)
    \right] = \mathbb{E}_{\wt{w}} \left(
    K_{\wt{W}}(t) - K_{C}(t) \wt{C}(t)
    \wt{C}^{*}(t) K_{C}^{*}(t) \right)  +
    \REV{\Tr[\wt{\Xi}(t)]}.
    \label{eqn:phiestimate}
  \end{equation}
  Here  \REV{$\wt{C}(t),\wt{\Xi}(t)$ solves the generalized eigenvalue
  problem  
  \begin{equation}
    K_{Z}(t) \wt{C}(t) = K_{W}(t) \wt{C}(t) \REV{\wt{\Xi}(t)},
    \label{eqn:generalevt}
  \end{equation}
  using Alg.~\ref{alg:geneig}}.
  \label{thm:ressdgc}
\end{theorem}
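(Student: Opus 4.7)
The plan is to verify the identity in~\eqref{eqn:phiestimate} by combining three ingredients: (i) the exact equality $K_Z(t) = Z^*(t) Z(t)$, which comes from Proposition~\ref{prop:aliasingremoval}; (ii) Hutchinson's trace identity (Theorem~\ref{thm:EVarTrace}) applied to two different Hermitian matrices; and (iii) the algebraic identity~\eqref{eqn:consistentlowrank} linking the generalized eigenvalues $\wt{\Xi}(t)$ to the quadratic form in $Z(t)$.

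Set $P \equiv g^{M/2}_{\sigma}(tI-A)$; since $A$ is Hermitian and the Chebyshev coefficients $\mu_{l}(t)$ are real, $P$ is a Hermitian polynomial in $A$. The first step is to observe that Proposition~\ref{prop:aliasingremoval} furnishes the \emph{exact} matrix equality $P^2 = \sum_{l=0}^{M} \nu_{l}(t) T_{l}(A)$, not merely an approximation. Hence $K_{Z}(t) = W^{*} P^2 W = (PW)^{*}(PW) = Z^{*}(t) Z(t)$. This identifies the generalized eigenvalue problem~\eqref{eqn:generalevt} with the pencil $(Z^{*}Z, W^{*}Z)$ that is fed into Algorithm~\ref{alg:geneig}, and the orthonormalization built into that algorithm then yields $\Tr\left[\wt{C}^{*}(t)\,Z^{*}(t)Z(t)\,\wt{C}(t)\right] = \Tr[\wt{\Xi}(t)]$, which is exactly the content of~\eqref{eqn:consistentlowrank}.

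The second step is to apply Hutchinson's identity twice under $\mathbb{E}_{\wt{w}}$, noting that $\wt{C}(t)$ and $\wt{\Xi}(t)$ depend only on $A$, $W$, and $t$, and are therefore deterministic with respect to $\wt{w}$. For the leading term, $\mathbb{E}_{\wt{w}}[K_{\wt{W}}(t)] = \mathbb{E}_{\wt{w}}[\wt{w}^{*} P \wt{w}] = \Tr[P]$ by Theorem~\ref{thm:EVarTrace}. For the correction term, rewrite the scalar $K_{C}(t)\wt{C}(t)\wt{C}^{*}(t)K_{C}^{*}(t) = \wt{w}^{*}\bigl(Z(t)\wt{C}(t)\wt{C}^{*}(t)Z^{*}(t)\bigr)\wt{w}$; the matrix in parentheses is Hermitian positive semidefinite, and Theorem~\ref{thm:EVarTrace} together with the cyclic property of the trace gives $\mathbb{E}_{\wt{w}}\bigl[K_{C}\wt{C}\wt{C}^{*}K_{C}^{*}\bigr] = \Tr[Z\wt{C}\wt{C}^{*}Z^{*}] = \Tr[\wt{C}^{*}Z^{*}Z\wt{C}] = \Tr[\wt{\Xi}(t)]$, where the last equality invokes the first step. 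Subtracting and rearranging yields~\eqref{eqn:phiestimate} by linearity of expectation.

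The one delicate point, really the crux of the theorem rather than a true obstacle, is the exactness asserted in the first step. Were the coefficients $\{\nu_{l}(t)\}$ obtained by an independent numerical Chebyshev expansion of $(g_{\sigma}(t-\cdot))^2$ rather than through Proposition~\ref{prop:aliasingremoval}, the matrix $K_{Z}$ would only approximate $Z^{*}Z$, the generalized eigenvalue problem~\eqref{eqn:generalevt} would no longer coincide with the one solved by Algorithm~\ref{alg:geneig} applied to $Z$, and the cancellation between $\Tr[\wt{\Xi}]$ and the correction term in~\eqref{eqn:phiestimate} would degenerate into an approximate equality with uncontrolled error. Recognizing that the aliasing-free expansion in Proposition~\ref{prop:aliasingremoval} is precisely what makes $K_{Z} = Z^{*}Z$ an exact matrix identity is what upgrades the claim from a heuristic approximation to a genuine equality in expectation.
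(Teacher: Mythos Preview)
Your proposal is correct and follows essentially the same route as the paper's proof: the paper invokes Proposition~\ref{prop:aliasingremoval} to obtain the exact identity $K_{Z}(t)=Z^{*}(t)Z(t)$ and then appeals directly to Eq.~\eqref{eqn:consistentlowrank} for the consistency with Hutchinson's method, while you spell out the two applications of Theorem~\ref{thm:EVarTrace} and the cancellation explicitly. Your additional remark on why the aliasing-free choice of $\{\nu_{l}\}$ is essential is a welcome elaboration of the same point the paper makes more tersely.
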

\begin{proof}
  By Proposition~\ref{prop:aliasingremoval},
  $\left(g^{M/2}_{\sigma}(tI-A)\right)^2$ can be exactly computed
  using a Chebyshev of degree $M$ with coefficients
  $\{\nu_{l}\}_{l=0}^{M}$.  Therefore
  \[
  K_{Z}(t) = Z^{*}(t) Z(t) = W^{*}\left( \sum_{l=0}^{M} \nu_{l}(t) T_{l}(A) W\right).
  \]
  The consistency between Hutchinson's method and the estimation of $\Tr[\wt{\Xi}(t)]$
  follows from Eq.~\eqref{eqn:consistentlowrank}.
\end{proof}

Following Theorem~\ref{thm:ressdgc}, the RESS-DGC algorithm is given in
Alg.~\ref{alg:rlrdos2}. 
\REV{Compared to DGC and SS-DGC method, the RESS-DGC method introduces
an additional parameter $\wt{N}_{v}$. 
In the numerical experiments, in order to carry out a fair
in the sense the \textit{total}
number of random vectors used are the same, i.e. for RESS-DGC, we always choose
$N_{v}+\wt{N}_{v}$ to be the same number of random vectors
$N_{v}^{\mathrm{DGC}}=N_{v}^{\mathrm{SS-DGC}}$ used in DGC and SS-DGC,
respectively. For instance, when $N_{v}^{\mathrm{DGC}}$ is relatively
small, in RESS-DGC we can
choose $N_{v}=\wt{N}_{v}=\frac12 N_{v}^{\mathrm{DGC}}$, i.e. half of the random vectors are used for low rank approximation, and half for hybrid
correction. When we are certain that $N_{v}$ is large enough, we can
eliminate the usage of the hybrid correction and take
$N_{v}=N_{v}^{\mathrm{DGC}}$ and $\wt{N}_{v}=0$.
}

\begin{algorithm}  
\begin{small}
\begin{center}
  \begin{minipage}{5in}
\begin{tabular}{p{0.5in}p{4.5in}}
{\bf Input}:  &  \begin{minipage}[t]{4.0in}
  Hermitian matrix $A$ with eigenvalues between $(-1,1)$;\\
  A set of points $\{t_{i}\}_{i=1}^{N_{t}}$ at which the DOS is to be evaluated;\\
  Polynomial degree $M$;\\
  Smearing parameter $\sigma$;\\
  Number of random vectors $N_{v},\wt{N}_{v}$.
\end{minipage} \\
{\bf Output}:  &  \begin{minipage}[t]{4.0in}
  Approximate DOS $\{\wt{\phi}_{\sigma}(t_i)\}$.
\end{minipage} 
\end{tabular}
\begin{algorithmic}[1]
  \STATE Compute the coefficient $\{\mu_{l}(t_{i})\}_{l=0}^{\frac{M}{2}}$ for each
  $t_{i},i=1,\ldots,N_{t}$ using Alg.~\ref{alg:dgcexpansion} for
  Eq.~\eqref{eqn:gcheb}, and let $\mu_{l}(t_{i})=0,\quad
  l=\frac{M}{2}+1,\ldots, M$.
  \STATE Compute the coefficient $\{\nu_{l}(t_{i})\}_{l=0}^{M}$ for each
  $t_{i},i=1,\ldots,N_{t}$ using Alg.~\ref{alg:dgcexpansion} for
  Eq.~\eqref{eqn:g2cheb}.
  \STATE Generate random Gaussian matrices $W\in \mathbb{R}^{N\times
  N_{v}}$ and $\wt{W}\in \mathbb{R}^{N\times \wt{N}_{v}}$.
  \STATE Initialize the three term recurrence matrices 
  $V_{m},V_{p}\gets \mathbf{0}\in \mathbb{C}^{N\times N_{v}},V_{c}\gets
  W$; $\wt{V}_{m},\wt{V}_{p}\gets \mathbf{0}\in \mathbb{C}^{N\times
  \wt{N}_{v}},\wt{V}_{c}\gets \wt{W}$.
  \STATE Initialize matrices $K_{W}(t_{i}),K_{Z}(t_{i})\gets
  \mathbf{0}\in \mathbb{C}^{N_{v}\times N_{v}},\quad i=1,\ldots,N_{t}$;
  $K_{C}(t_{i})\gets \mathbf{0}\in \mathbb{C}^{\wt{N}_{v}\times
  N_{v}}$ and $K_{\wt{W}}\gets \mathbf{0}\in \mathbb{C}^{\wt{N}_{v}\times
  \wt{N}_{v}},\quad i=1,\ldots,N_{t}$.
  \FOR {$l=0,\ldots,M$}
  \STATE Compute $X_{W} \gets W^{*}V_{c}, \quad X_{C} \gets \wt{W}^{*}
  V_{c}, \quad X_{\wt{W}} \gets \wt{W}^{*} \wt{V}_{c}$.
  \FOR {$i=1,\ldots, N_{t}$}
  \STATE Accumulate $K_{W}(t_{i})\gets K_{W}(t_{i}) + \mu_{l}(t_{i})
  X_{W},\quad K_{Z}(t_{i})\gets K_{Z}(t_{i}) + \nu_{l}(t_{i})
  X_{W}$.
  \STATE Accumulate $K_{C}(t_{i})\gets K_{C}(t_{i}) +
  \mu_{l}(t_{i}) X_{C},\quad K_{\wt{W}}(t_{i})\gets K_{\wt{W}}(t_{i}) + 
  \mu_{l}(t_{i}) X_{\wt{W}}$.
  \ENDFOR
  \STATE $V_{p}\gets (2-\delta_{l0}) A V_{c} - V_{m}$.
  \STATE $V_{m}\gets V_{c}, V_{c}\gets V_{p}$.
  \STATE $\wt{V}_{p}\gets (2-\delta_{l0}) A \wt{V}_{c} - \wt{V}_{m}$.  
  \STATE $\wt{V}_{m}\gets \wt{V}_{c}, \wt{V}_{c}\gets \wt{V}_{p}$.
  \ENDFOR
  \FOR {$i=1,\ldots, N_{t}$}
  \STATE Compute $\wt{\phi}_{\sigma}(t_{i})\gets 
  \Tr\left[ g^{\frac{M}{2}}_{\sigma}(t_{i}I-A)
  \right]  \approx \frac{1}{\wt{N}_{v}} \Tr\left[
  K_{\wt{W}}(t_{i}) - K_{C}(t_{i}) \wt{C}(t_{i})
  \wt{C}^{*}(t_{i}) K_{C}^{*}(t_{i}) \right]  +
  \Tr[\wt{\Xi}(t_{i})]$, where
  \REV{$\wt{\Xi}(t_{i})$,$\wt{C}(t_{i})$ are computed
  from Alg.~\ref{alg:geneig}.}
%
  \ENDFOR
\end{algorithmic}
\end{minipage}
\end{center}
\end{small}
\caption{Robust and efficient spectrum sweeping with
Delta-Gauss-Chebyshev (RESS-DGC) method for estimating the DOS.}
\label{alg:rlrdos2}
\end{algorithm}

\subsection{Complexity}\label{subsec:resscomplexy}

Following the same setup in section~\ref{subsec:resscomplexy}, we assume
that a series of matrices are spectrally uniformly distributed. 
We assume the Chebyshev polynomial
degree $M\sim \Or(N)$, and $c_{\mathrm{matvec}}\sim \Or(N)$. 
 \REV{In the complexity
analysis below, we neglect any contribution on the order of $\log N$.}
We also
assume that $N_{v}$ is kept as a constant and is omitted in the
asymptotic complexity count with respect to $N$ and $N_{t}$.

In terms of the computational cost, the computational cost for applying the
Chebyshev polynomial to the random matrix $W$ is $c_{\mathrm{matvec}} M
N_{v}\sim \Or(N^2)$. The cost for updating $K_{W}(t_i)$ and
$K_{Z}(t_i)$ is $N_{t} M N_{v}^2\sim \Or(N N_{t})$.  The cost
for computing the DOS by trace estimation for each $t_{i}$ is $\Or(N_{t}
N_{v}^3) \sim \Or(N_{t})$.  So the total cost is $\Or(N N_{t}+N^2)$.

In terms of the memory cost, the advantage of using
Alg.~\ref{alg:rlrdos2} also becomes clear here. The matrices $\{Z(t_{i})\}$
do not need to be computed or stored. Using the three-term recurrence
for Chebyshev polynomials, the matrices
$K_{W}(t),K_{Z}(t),K_{C}(t),K_{\wt{W}}(t)$ can be 
updated gradually, and the cost for storing these matrices are
$\Or(N_{v}^2 N_{t} \sim N_{t})$.  The cost for storing the matrix $W$ is
$\Or(N N_{v})\sim \Or(N)$.   So the total storage cost is
$\Or(N+N_{t})$.  

If we also assume $N_{t}\sim \Or(N)$ due to the vanishing choice of
$\sigma$, then the computational cost scales as $\Or(N^2)$ and the
storage cost scales as $\Or(N)$.

\section{Application to trace estimation of general matrix functions}\label{sec:traceest}

As an application for the accurate estimation of the
DOS, we consider the problem of estimating the trace of a smooth
matrix function as in Eq.~\eqref{eqn:tracefA}. 
In general $f(t)$ is not a localized function on
the real axis, and Alg.~\ref{alg:robustrantrace} based on low rank
decomposition cannot be directly used to estimate $\Tr[f(A)]$. 

However, if we assume that there exists $\sigma>0$ so that the Fourier
transform of $f(t)$ decays faster than the Fourier transform of
$g_{\sigma}(t)$, where $g_{\sigma}$ is a Gaussian function, \REV{then we
can} find a smooth function $\wt{f}(t)$ satisfying
\begin{equation}
  (\wt{f}*g_{\sigma})(t) = \int_{-\infty}^{\infty} \wt{f}(s)
  g_{\sigma}(t-s) \ud s = f(t).
  \label{eqn:deconvolv}
\end{equation}
The function $\wt{f}(t)$ can be obtained via a deconvolution procedure.
Formally
\begin{equation}
  \begin{split}
  \frac{1}{N}\Tr[f(A)] = & \int_{-\infty}^{\infty} f(t) \phi(t) \ud t 
  = \int_{-\infty}^{\infty} \int_{-\infty}^{\infty} \wt{f}(s)
  g_{\sigma}(t-s) \phi(t) \ud t \ud s \\
  = &\int_{-\infty}^{\infty} \wt{f}(s) \phi_{\sigma}(s) \ud s.
  \end{split}
  \label{eqn:traceconv}
\end{equation}
Eq.~\eqref{eqn:traceconv} states that the trace of the matrix function
$f(A)$ can be accurately computed from the integral of
$\wt{f}(s)\phi_{\sigma}(s)$, which is a now smooth function.  \REV{Since
the spectrum of $A$ is \REV{assumed to be} in the interval $(-1,1)$, the integration
range in Eq.~\eqref{eqn:traceconv} can be replaced by a finite
interval.}
The integral can be evaluated accurately via a trapezoidal rule, and
we only need the value of the DOS $\phi_{\sigma}$ evaluated on the
points requested by the quadrature. In such a way, we \REV{``transfer''} the
smoothness of $f(t)$ to the regularized DOS without losing accuracy.

The deconvolution procedure~\eqref{eqn:deconvolv} can be performed via a
Fourier transform. Assume that the eigenvalues of $A$ are further contained in
the interval $(-a,a)\subset (-1,1)$ \REV{with $0<a<1$}.
Then the Fourier transform requires that the function
$f(t)$ is a periodic function on a interval containing $(-a,a)$, which
is in general not satisfied in practice. However, note that the interval
in Eq.~\eqref{eqn:tracefA} does not require the exact function
$f(t)$. In fact
\[
\Tr[f(A)] = \int_{-a}^{a}
f(t)\phi(t) \ud t = \int_{-a}^{a} h(t) \phi(t) \ud t
\]
for any smooth function $h(t)$ satisfying
\begin{equation}
  h(t) = f(t), \quad t\in (-a,a).
  \label{eqn:hconstraint}
\end{equation}
In particular, $h(t)$ can be \REV{extended} to be a periodic function on the
interval $[-1,1]$.  In this work, we construct $h(t)$ as follows.
\begin{equation}
  h(t) = f(t) \pi(t) + \frac{f(-1)+f(1)}{2} \left( 1-\pi(t) \right).
  \label{eqn:hform}
\end{equation}
\REV{We remark that the constant $\frac{f(-1)+f(1)}{2}$ in
Eq.~\eqref{eqn:hform} is not important and can be in principle any
real number.}
Here $\pi(t)$ is a function with $\pi(t)=1$ for $t\in (-a,a)$, and
smoothly goes to $0$ outside $(-a,a)$.  It is easy to verify that such choice
of $h(t)$ satisfies Eq.~\eqref{eqn:hconstraint} and is periodic on
$(-1,1)$.  There are many choice of $\pi(t)$, and here we use
\begin{equation}
  \pi(t)  = \frac{1}{2}\left[ \mathrm{erf}\left(
  \frac{1+a-2t}{\wt{\sigma}} \right)-
  \mathrm{erf}\left(
  \frac{-1-a-2t}{\wt{\sigma}} \right) 
  \right].
  \label{eqn:pifunc}
\end{equation}
Here $\mathrm{erf}$ is the error function. 
\REV{If $\wt{\sigma}$ is chosen to be small enough, then} $\pi(t)$ as defined in
Eq.~\eqref{eqn:pifunc} satisfies the requirement.

Alg.~\ref{alg:resstrace} describes the procedure for computing the trace
of a matrix function.

\begin{algorithm}[h]
\begin{small}
\begin{center}
  \begin{minipage}{5in}
\begin{tabular}{p{0.5in}p{4.5in}}
{\bf Input}:  &  \begin{minipage}[t]{4.0in}
  Hermitian matrix $A$ with eigenvalues between $(-a,a)$ where
  $0<a<1$;\\ 
  Smooth function $f(t)$;\\
  Smearing parameter $\sigma,\wt{\sigma}$.
\end{minipage} \\
{\bf Output}:  &  \begin{minipage}[t]{4.0in}
  Estimated value of $\Tr[f(A)]$.
\end{minipage} 
\end{tabular}
\begin{algorithmic}[1]
  \STATE Compute auxiliary function $h(t)$ according to
  Eq.~\eqref{eqn:hform}.
\STATE Compute $\{\wt{f}(t_{i})\}_{i=1}^{N_{t}}$ satisfying $(\wt{f}*g_{\sigma})(t)=h(t)$
  on $(-1,1)$ through the Fourier transform on a uniform set of points
  $\{t_{i}\}_{i=1}^{N_{t}}$ with spacing $\Delta t=t_2-t_1$.
  \STATE \REV{Use one of the algorithms} to compute 
  $\{\wt{\phi}_{\sigma}(t_{i})\}$.
  \STATE Compute $\Tr[f(A)]\approx N \Delta t \sum_{i=1}^{N_{t}} \wt{f}(t_{i})
  \wt{\phi}_{\sigma}(t_{i})$.
\end{algorithmic}
\end{minipage}
\end{center}
\end{small}
\caption{Spectrum sweeping method for estimating the trace of a matrix function.}
\label{alg:resstrace}
\end{algorithm}

\FloatBarrier

\section{Numerical examples}\label{sec:numer}

In this section we demonstrate the accuracy and efficiency of the SS-DGC
and RESS-DGC methods for computing the spectral densities and for estimating
the trace.  \REV{For the asymptotic scaling of the
method, we need a series of matrices that are approximately spectrally
uniformly distributed. These are given by the ModES3D\_X matrices to be
detailed below. In order to demonstrate that the methods are also
applicable to general matrices, we also give test results for two
matrices obtained from the University of Florida matrix
collection~\cite{FloridaMatrix}. }
All
the computation is performed on a single computational thread of an
Intel i7 CPU processor with $64$ gigabytes (GB) of memory using MATLAB.

As a
model problem, we consider a second order partial differential operator
$\hat{A}$ in a three-dimensional (3D) cubic domain with periodic
boundary conditions.  For a smooth function $u(x,y,z)$, $\hat{A}u$ is
given by
\[
(\hat{A} u)(x,y,z) = -\Delta u(x,y,z) + V(x,y,z) u(x,y,z).
\]
The matrix $A$ is obtained from a $7$-point finite difference
discretization of $\hat{A}$.  In order to create a series of matrices,
first we consider one cubic domain $\Omega=[0,L]^3$ and $V(x,y,z)$ is
taken to be a Gaussian function centered at
$(L/2,L/2,L/2)^{T}$. This is called a ``unit cell''. The unit cell is then
extended by $n$ times along the $x,y,z$ directions, respectively, and
the resulting domain is $[0,nL]^3$ and $V(x,y,z)$ is the linear
combination of $n^3$ Gaussian functions.  Such matrix can be interpreted as a
model matrix for electronic structure calculation.  Each
dimension of the domain is uniformly discretized with grid spacing $h$,
and the resulting matrix $A$ is denoted by ModES3D\_X where X is the
total number of unit cells.  Here we take $L=6$ and $h=0.6$. Some
characteristics of the matrices, including the dimension, the smallest
and the largest eigenvalue are given in Table~\ref{tab:testmat}.
Fig.~\ref{fig:Vpot} (a), (b) shows the isosurface of one example of such
potential for the matrix ModES3D\_1, and ModES3D\_8, respectively.
Fig.~\ref{fig:ModDOS} shows the DOS corresponding to low lying
eigenvalues for the matrices with $X=1,8,27,64$ for a fixed
regularization parameter $\sigma=0.02$, which indicate that the spectral
densities is roughly uniform.  

\begin{table}[h]
  \centering
  \REV{
  \begin{tabular}{c|c|c|c}
    \hline
    Matrix & $N$ & min(ev) & max(ev)  \\ 
    \hline
    ModES3D\_1 & 1000 & -2.22 & 32.23 \\
    ModES3D\_8 & 8000 & -2.71 & 31.31 \\
    ModES3D\_27 & 27000 & -2.75 & 31.30 \\
    ModES3D\_64 & 64000 & -2.76 & 32.30 \\
    \hline
    pe3k & 9000 & $8\times 10^{-6}$ & 127.60 \\
    shwater & 81920 & 5.79 & 20.30 \\
    \hline
  \end{tabular}
  }
  \caption{Characteristics of the test matrices.}
  \label{tab:testmat}
\end{table}

\begin{figure}[h]
  \begin{center}
    \subfloat[(a) ModES3D\_1]{\includegraphics[width=0.35\textwidth]{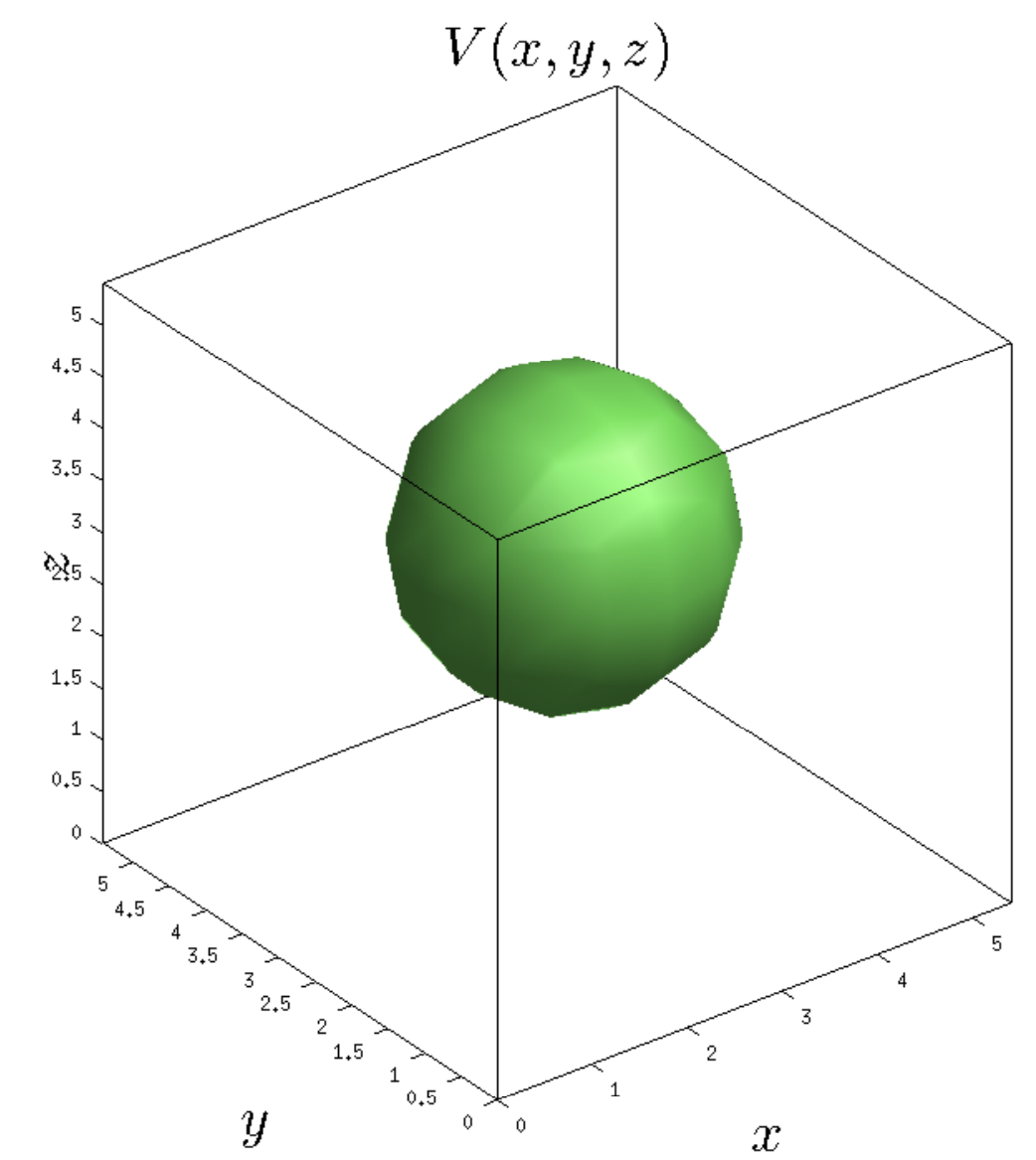}} \quad
    \subfloat[(b) ModES3D\_8]{\includegraphics[width=0.35\textwidth]{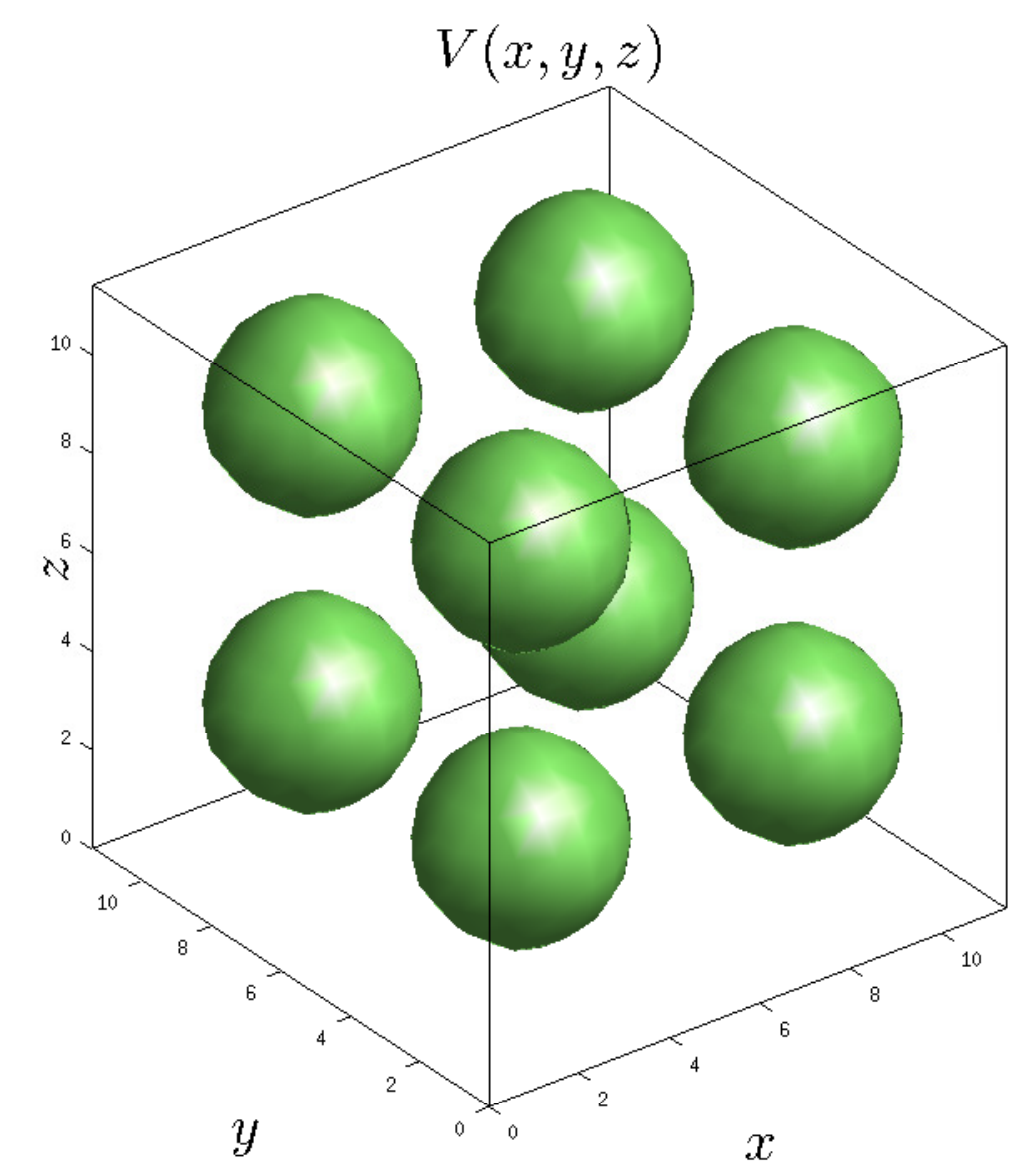}}
  \end{center}
  \caption{Isosurface for $V(x,y,z)$.}
  \label{fig:Vpot}
\end{figure}

\begin{figure}[h]
  \begin{center}
    \includegraphics[width=0.40\textwidth]{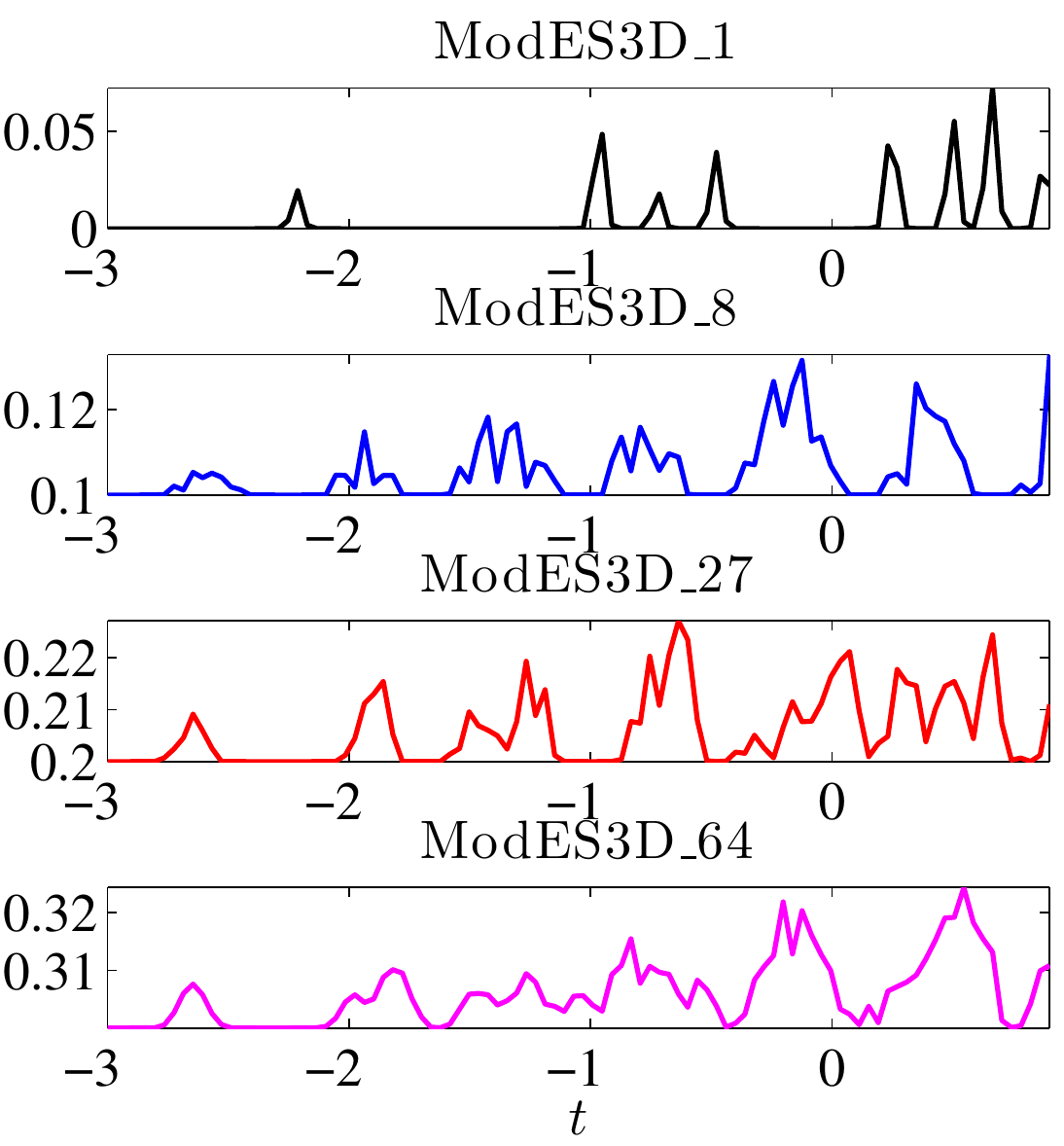}
  \end{center}
  \caption{Shape of the DOS for a series of matrices with
  $\sigma=0.02$.}
  \label{fig:ModDOS}
\end{figure}

\subsection{Spectral densities}\label{subsec:specden}

In order to compare with the accuracy of the DOS, the exact DOS is
obtained by solving eigenvalues corresponding to the region of interest
for computing the DOS. We use the locally optimal block preconditioned
conjugate gradient (LOBPCG)~\cite{Knyazev2001} method. The LOBPCG method
is advantageous for solving a large number of eigenvalues and
eigenvectors since it can effectively take advantage of the BLAS3
operations by solving all eigenvectors simultaneously. The number of
eigenvectors to be computed is set to be $35 X$ for the test
matrices ModES3D\_X with $X=1,8,27,64$, respectively, and the highest
eigenvalue obtained with such number of eigenvectors is slightly larger
than $1.0$.  The tolerance of LOBPCG is set to be $10^{-8}$ and the
maximum number of iterations is set to be $1000$. 

We measure the error of the approximate DOS using the relative
$L^{1}$ error defined as follows. Denote by
$\wt{\phi}_{\sigma}(t_{i})$ 
the approximate DOS evaluated at a series of uniformly distributed
points $t_{i}$, and by $\phi_{\sigma}(t_{i})$ the exact DOS obtained from
the eigenvalues.  Then the error is defined as 
\begin{equation}
  \mathrm{Error} = \frac{\sum_{i}
  \abs{\wt{\phi}_{\sigma}(t_{i})-\phi_{\sigma}(t_{i})}}{\sum_{i}
  \abs{\phi_{\sigma}(t_{i})}}.
  \label{eqn:error}
\end{equation}

%
For the DGC and SS-DGC method, an $M$-th order Chebyshev polynomial is
used to evaluate $Z(t_{i})$. For the RESS-DGC method, an $M$-th order
Chebyshev polynomial should be used to expand $Z^{*}(t_{i})Z(t_i)$.
Following Theorem~\ref{thm:ressdgc}, only an $M/2$-th order Chebyshev
polynomial can be used to evaluate $Z(t_i)$.  Similarly when
$N_{v}$ random vectors are used for DGC and SS-DGC, RESS-DGC is a hybrid
method containing two terms.  
\REV{
As discussed in section~\ref{subsec:robustdos}, the number of random
vectors used in DGC and SS-DGC are the same i.e.
$N_{v}^{\mathrm{DGC}}=N_{v}^{\mathrm{SS-DGC}}$.
For RESS-DGC, we use $N_{v}^{\mathrm{RESS-DG}}=\frac12
N_{v}^{\mathrm{DGC}}$ for the low rank approximation, and
$\wt{N}_{v}^{\mathrm{RESS-DGC}}=\frac12 N_{v}^{\mathrm{DGC}}$ for the
hybrid correction.}
This setup makes sure
that all methods perform \textit{exactly the same} number of matrix-vector
multiplications. 


Fig.~\ref{fig:lap3dnumUnit2Mdeg} (a) shows the error of the three methods
for the ModES3D\_8 matrix when a very small number of random vectors
$N_{v}=40$ is used, with increasing polynomial degrees $M$ from $200$ to
$3200$. Here we use $\sigma=0.05$.
Note the relatively large polynomial degree is mainly due to the
relatively large spectral radius compared to the desired resolution as
in Table~\ref{tab:testmat}. This can be typical in practical
applications.
DGC is slightly more accurate for
low degree of polynomials, but as $M$ increases, RESS-DGC becomes more
accurate.  
Fig.~\ref{fig:lap3dnumUnit2Mdeg} (b) shows the error when a relatively
large number of random vectors $N_{v}=160$ is used. When $M$ is large
enough, both SS-DGC and RESS-DGC can be significantly more accurate than
DGC. SS-DGC is slightly more accurate here, because it uses the
Chebyshev polynomials and random vectors more optimally than RESS-DGC,
though the computational cost can be higher when spectral densities at a
large number of points $N_{t}$ need to be evaluated.
%

\begin{figure}[h]
  \begin{center}
    \subfloat[(a)]{\includegraphics[width=0.35\textwidth]{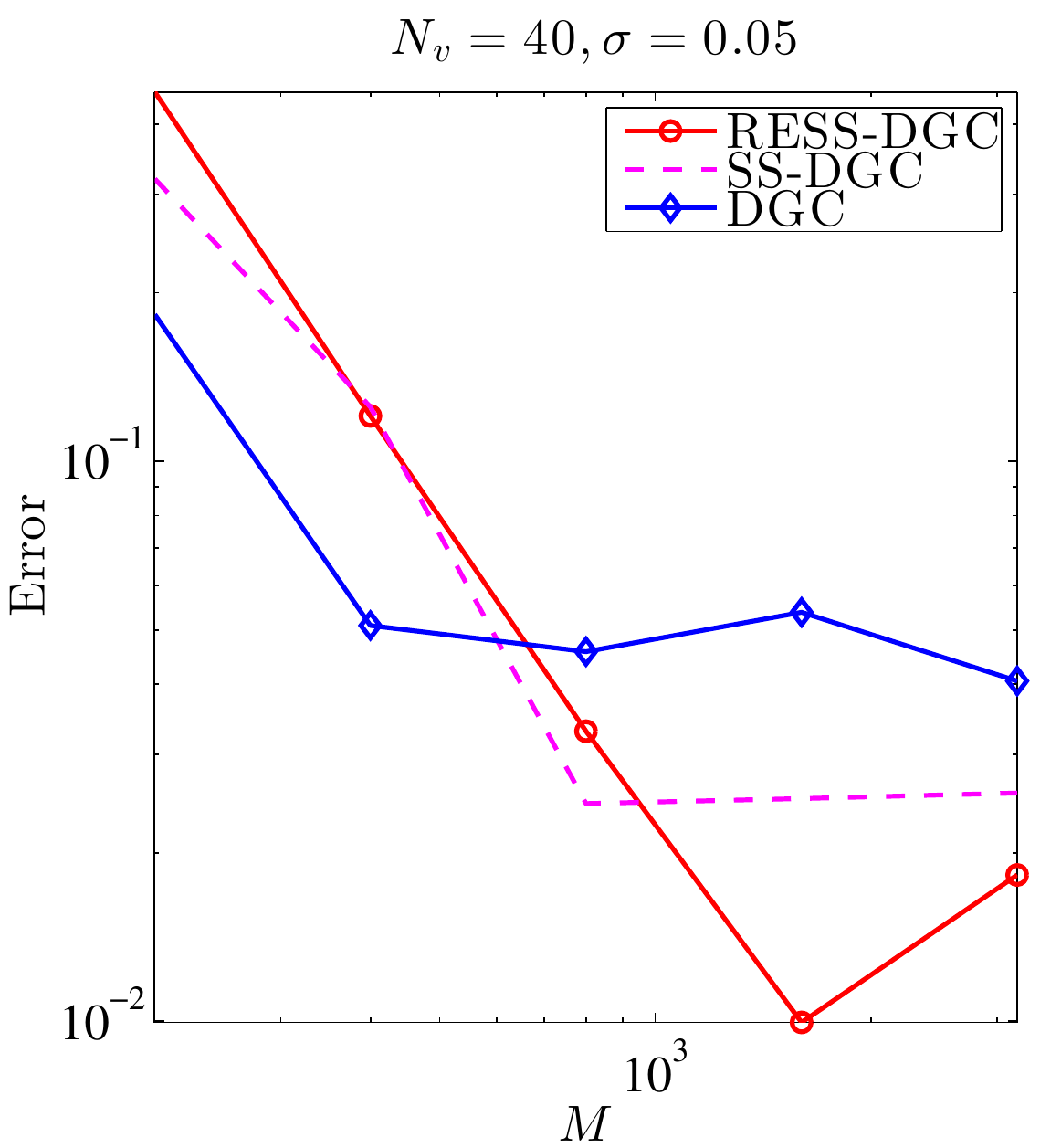}}
    \subfloat[(b)]{\includegraphics[width=0.35\textwidth]{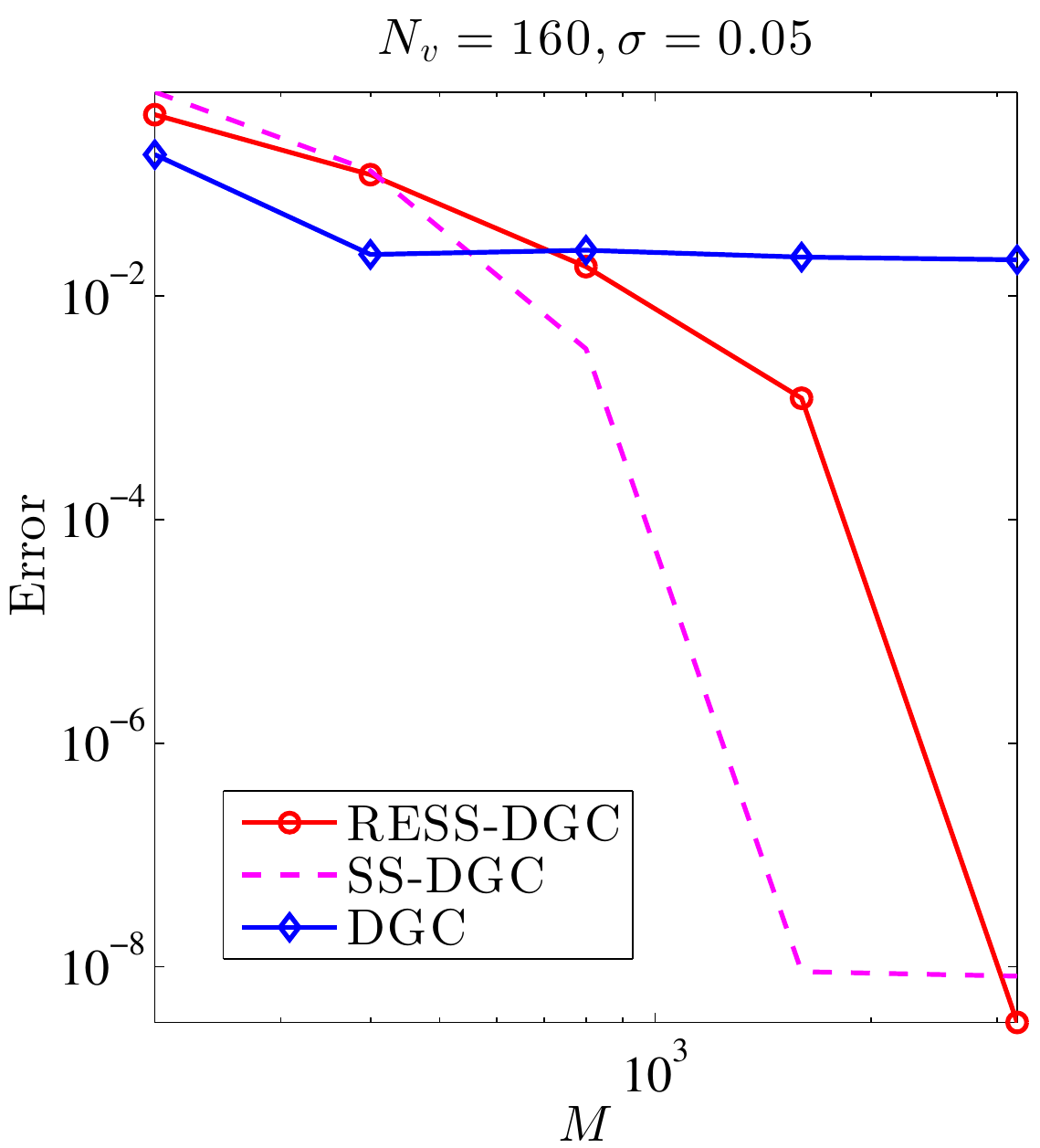}}
  \end{center}
  \caption{For the ModES3D\_8 matrix, the error of the DOS with respect
  to increasing polynomial degrees for (a) A small number of random
  vectors $N_{v}=40$ (b) A large number of random vectors $N_{v}=160$.}
  \label{fig:lap3dnumUnit2Mdeg}
\end{figure}

Fig.~\ref{fig:lap3dnumUnit2Nvec} (a) shows the comparison of the
accuracy of three methods for a relatively low degree of polynomials
$M=800$ and with increasing number of random vectors $N_{v}$.  When
$N_{v}$ is small, SS-DGC has $\Or(1)$ error, and this error is much
suppressed in RESS-DGC thanks to the hybrid correction scheme.  It is
interesting to see that RESS-DGC outperforms DGC for all
choices of $N_{v}$, but its accuracy is eventually limited by the
insufficient number of Chebyshev polynomials to expand the Gaussian
function.  SS-DGC is more accurate than RESS-DGC when $N_{v}$ is large
enough. This is because in such case the low rank decomposition captures the correlation between the results obtained among
different random vectors more
efficiently. On the contrary,  Hutchinson's method for
which DGC relies on only reduces the error only through direct Monte
Carlo sampling.  Fig.~\ref{fig:lap3dnumUnit2Nvec} (b) shows the case
when a relatively large number of polynomials $M=2400$ is used. Again
for small $N_{v}$, RESS-DGC \REV{reduces} the large error \REV{compared
to} the SS-DGC
method, while for large enough $N_{v}$ both SS-DGC and RESS-DGC can be
very accurate.

\begin{figure}[h]
  \begin{center}
    \subfloat[(a)]{\includegraphics[width=0.35\textwidth]{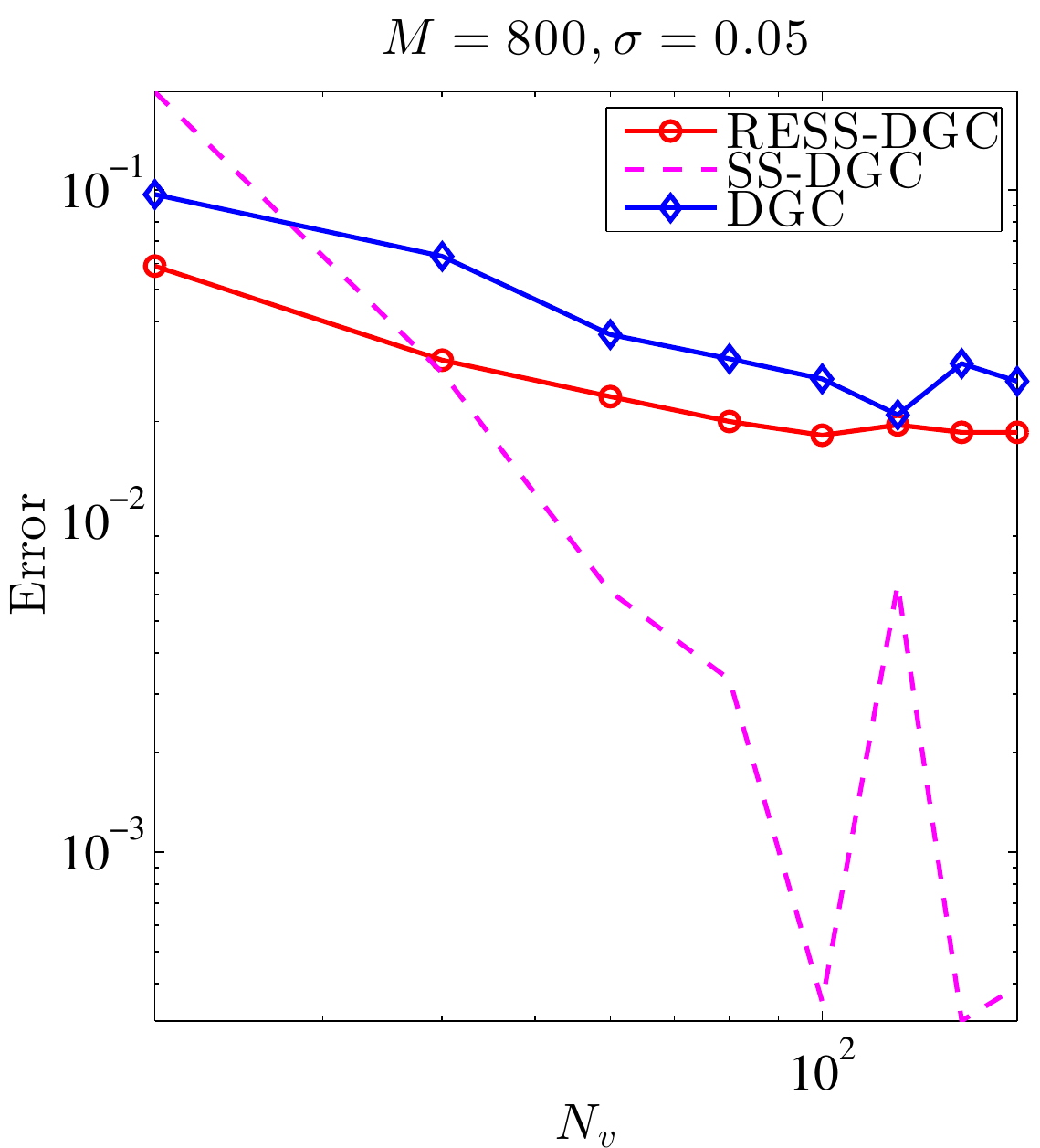}}
    \subfloat[(b)]{\includegraphics[width=0.35\textwidth]{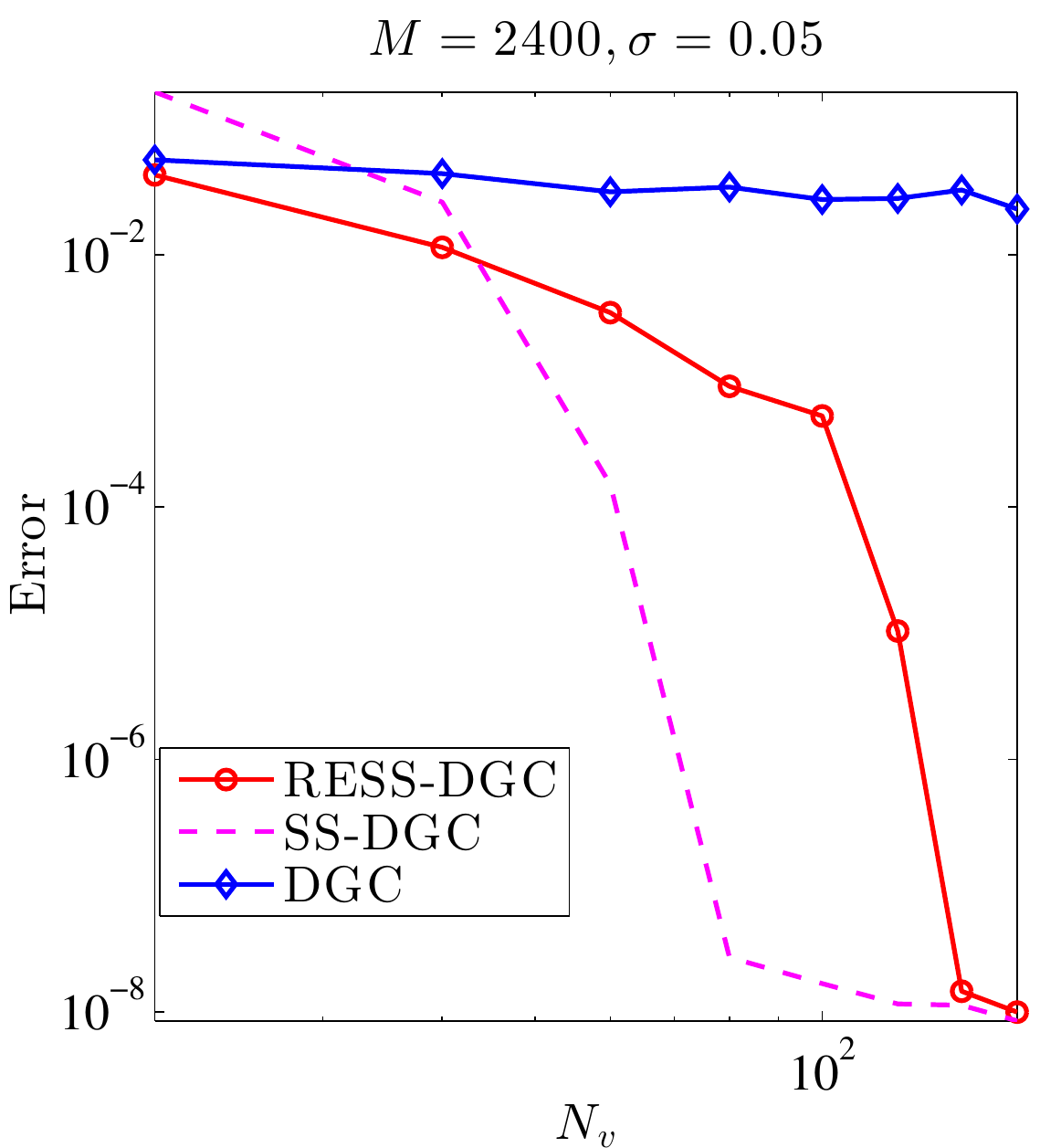}}
  \end{center}
  \caption{For the ModES3D\_8 matrix, the error of the DOS with respect
  to increasing random vectors for (a) A low polynomial degree
  $M=800$ (b) A high polynomial degree $M=2400$.}
  \label{fig:lap3dnumUnit2Nvec}
\end{figure}

\REV{In both SS-DGC and RESS-DGC methods, the parameter $\sigma$ is
important since it determines both the degrees of Chebyshev polynomial
to accurately expand $g_{\sigma}$, and the number of random
vectors needed for accurate low rank approximation. Fig.~\ref{fig:lap3dnumUnit2sigma} shows the error of
DGC, SS-DGC and RESS-DGC with $\sigma$ varying from $0.02$ to $0.1$.
Here we choose $M=120/\sigma$ and $N_{v}=3200\sigma$. This
corresponds to the case when a relatively high degrees of Chebyshev
polynomial and a relatively large number of random vectors are used in
the previous discussion. We observe that the scaling $M\sim
\Or(\sigma^{-1})$ and $N_{v}\sim \Or(\sigma)$ is important for spectrum
sweeping type methods to be accurate, and SS-DGC and RESS-DGC can
significantly outperform DGC type methods in terms of
accuracy.
}

\begin{figure}[h]
  \begin{center}
    \includegraphics[width=0.35\textwidth]{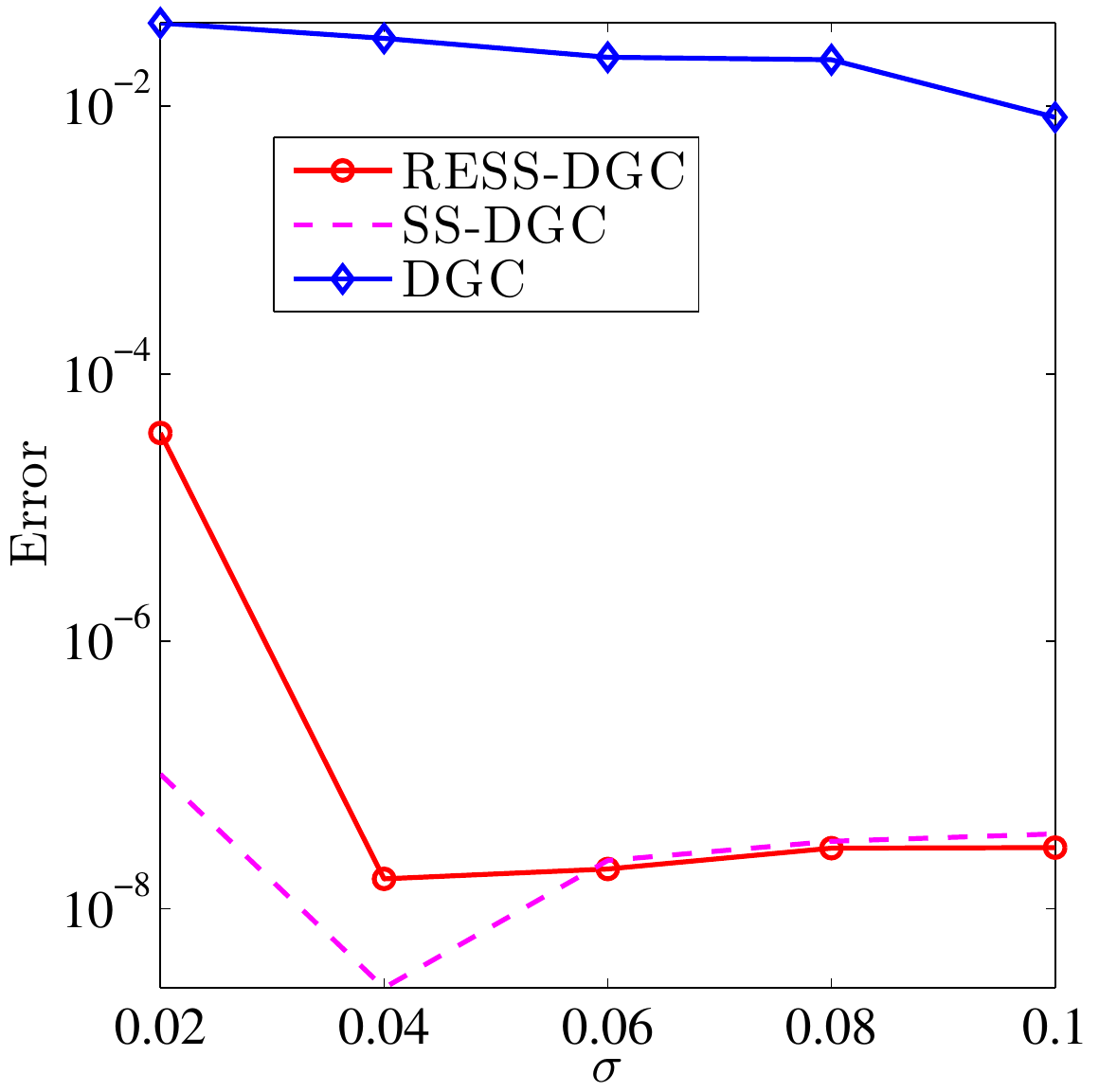}
  \end{center}
  \caption{\REV{For the ModES3D\_8 matrix, the error of the DOS with respect
  to different choices of $\sigma$.}}
  \label{fig:lap3dnumUnit2sigma}
\end{figure}

In order to study the weak scalability of the methods using the
ModES3D\_X matrices, as given in the complexity analysis, the polynomial
degrees $M$ should be chosen to be proportional to $X$. Here $M=300 X$
for $X=1,8,27,64$, respectively.  Correspondingly $\sigma=0.4/X$ and
$N_{t}=5 X$.  This allows us to use the same number of random vectors
$N_{v}=150$ for all matrices.  Fig.~\ref{fig:lap3dscalen} shows the
wall clock time of the three methods.   Both SS-DGC
and LOBPCG are asymptotically $\Or(N^3)$ methods with respect to the
increase of the matrix size, and the cubic scaling becomes apparent from $X=27$ to
$X=64$. RESS-DGC is only slightly more expensive than DGC and scales
as $\Or(N^2)$. For ModES3D\_64, the wall clock time for DGC, RESS-DGC,
SS-DGC and LOBPCG is 2535, 3293, 11979, 49389 seconds, respectively. 
Here RESS-DGC is $15$ times faster than LOBPCG and is the most
effective method.  Fig.~\ref{fig:lap3dscalen} (b) shows the accuracy in
terms of the relative $L^1$ error. Both SS-DGC and RESS-DGC can be
significantly more accurate than DGC.  We remark that since $N_{v}$ is
large enough in all cases here, the efficiency of RESS-DGC can be
further improved by noting that it only effectively uses half of the
random vectors here, due to the small contribution from the other half
of random vectors used for the hybrid correction.

\begin{figure}[h]
  \begin{center}
    \subfloat[(a)]{\includegraphics[width=0.4\textwidth]{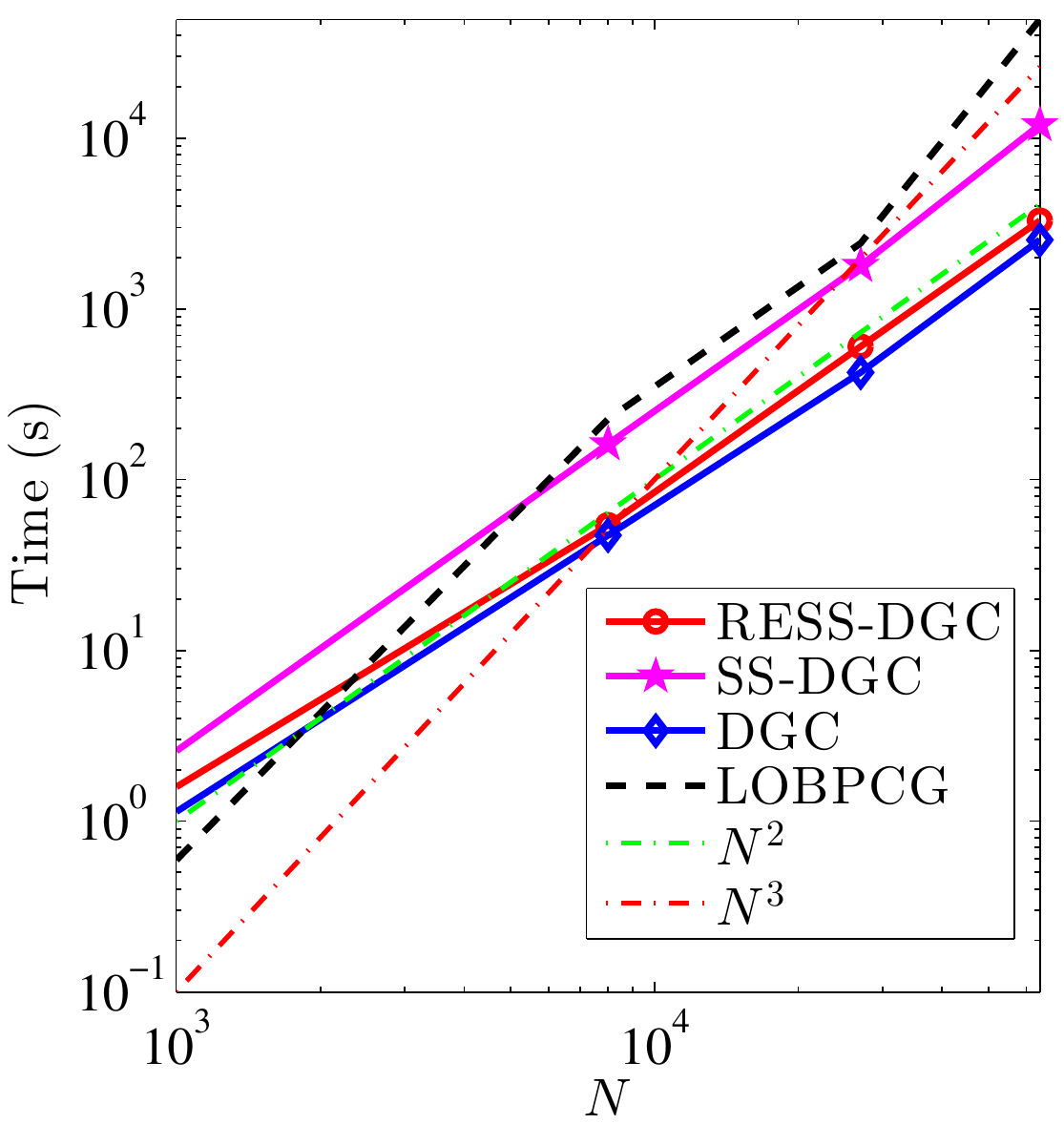}}
    \subfloat[(b)]{\includegraphics[width=0.4\textwidth]{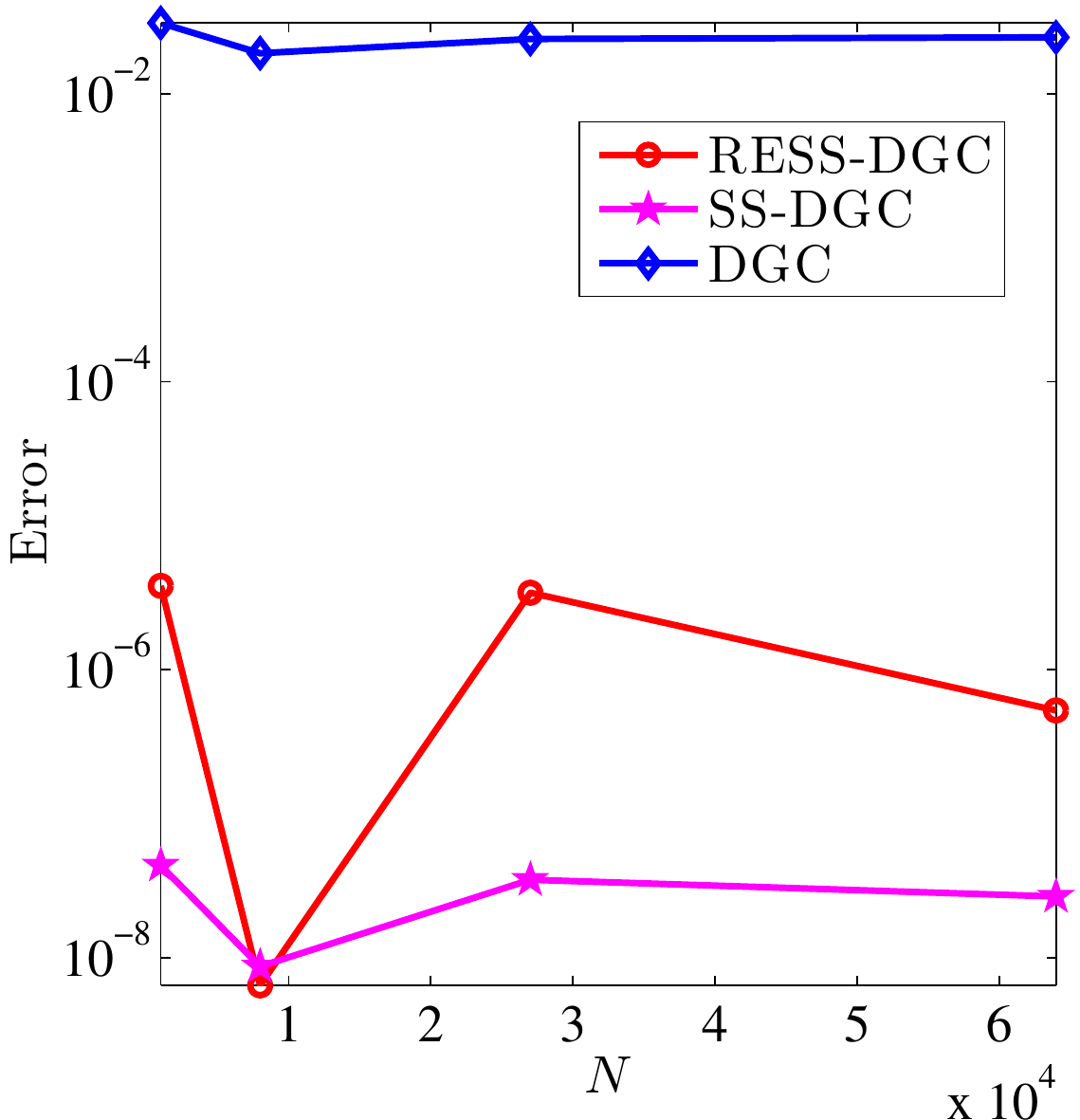}}
  \end{center}
  \caption{(a) Wall clock time of the DGC, SS-DGC and RESS-DGC methods compared with
  diagonalization method using LOBPCG. (b) The
  error of the DOS computed at RESS-DGC and the DGC method.}
  \label{fig:lap3dscalen}
\end{figure}

%



%




\subsection{Trace estimation}

As discussed in section~\ref{sec:traceest}, the accurate calculation of
the regularized DOS can be used for trace estimation. To demonstrate
this, we use the same ModES3D\_X matrices, and let 
$f(A)$ be the Fermi-Dirac function, i.e.
\[
\Tr[f(A)] =  \Tr\left[ \frac{1}{1+\exp(\beta(A-\mu I)} \right]
\]
is to be computed.  In electronic structure calculation, $\mu$ has the
physical meaning of chemical potential, and $\beta$ is the inverse
temperature.  The trace of the Fermi-Dirac distribution has the physical
meaning of the number of electrons at chemical potential $\mu$.  Here
$\beta=10.0,\mu=-1.0$. The value of $\sigma$ that can be used for the
deconvolution procedure in Eq.~\eqref{eqn:traceconv} should be chosen
such that after deconvolution $\wt{f}(s)$ is still a smooth function.
Here we use $\sigma=0.05$ for $X=1$, and  $\sigma=0.4/X$ for
$X=8,27,64$, respectively.  \REV{The value of $\wt{\sigma}$ for the
smearing function in Eq.~\eqref{eqn:pifunc} is $0.016$.} Correspondingly the polynomial degree $M=300
X$.  The number of random vectors $N_{v}$ is kept to be $100$ for all
calculations.

Fig.~\ref{fig:tracelap3d} shows the relative error of the trace for DGC,
SS-DGC and RESS-DGC. We observe that SS-DGC and RESS-DGC can be
significantly more accurate compared to DGC, due to the better
use of the correlated information obtained among different random
vectors.  Again when $N_{v}$ is sufficiently large, SS-DGC is more accurate since
the hybrid strategy in RESS-DGC is no longer needed here.

\begin{figure}[h]
  \begin{center}
    \includegraphics[width=0.4\textwidth]{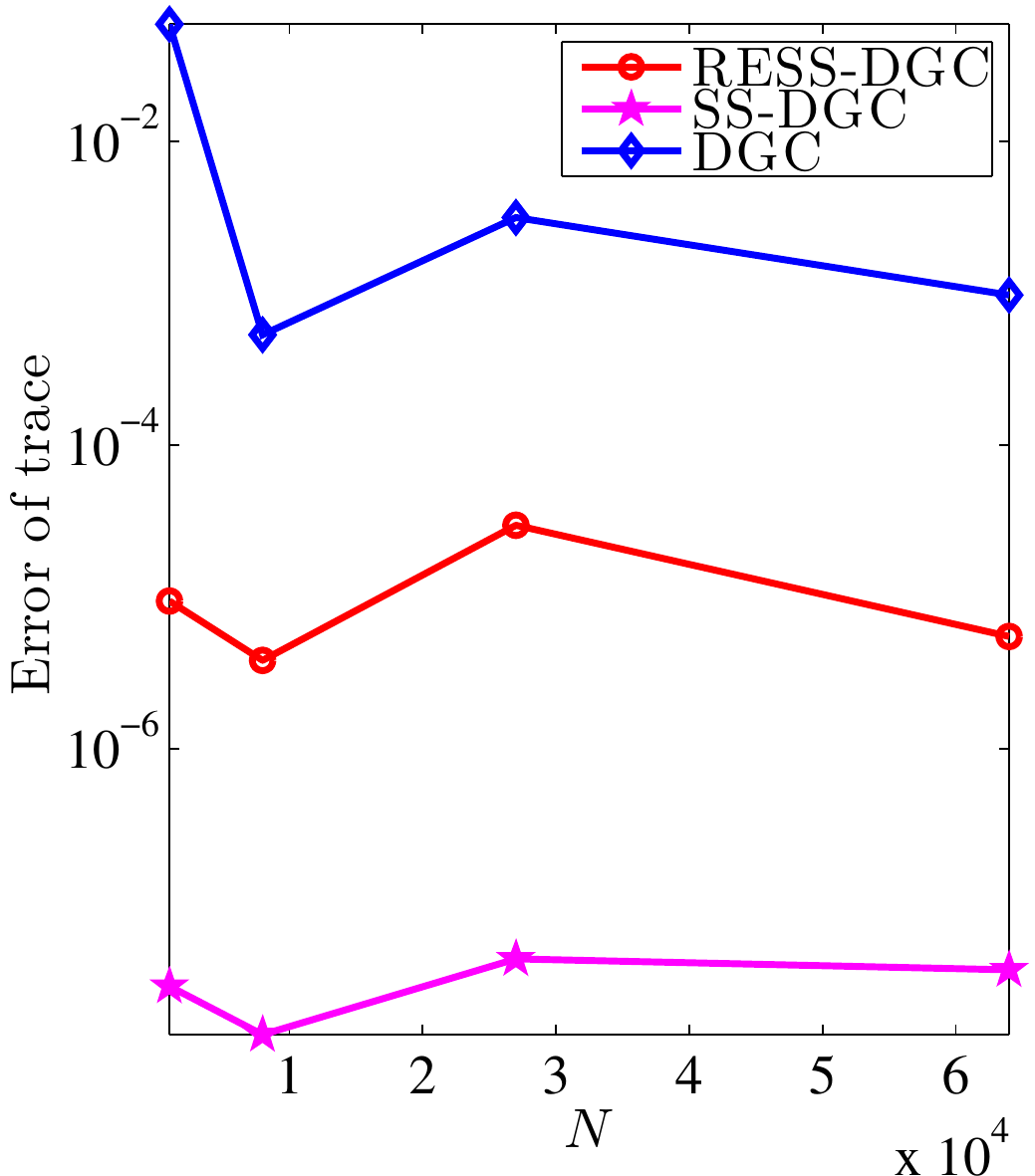} 
  \end{center}
  \caption{Relative error for estimating the trace of Fermi-Dirac
  functions applied to ModES3D\_X matrices.}
  \label{fig:tracelap3d}
\end{figure}

\subsection{Other matrices}
\REV{In section~\ref{subsec:specden}, we verified that both SS-DGC and
RESS-DGC can obtain very accurate estimation of the DOS when the
degrees of Chebyshev polynomial and the number of random vectors are
large enough, and RESS-DGC can lead to more efficient implementation.
In this section we further verify that RESS-DGC can achieve high
accuracy for other test matrices, when the degrees of polynomial and
number of vectors $N_{v}$ is large enough. The test matrices
pe3k and shwater are obtained from the University of Florida matrix
collection~\cite{FloridaMatrix}, and are used as test
matrices in~\cite{LinSaadYang2015}. The character of the matrices is 
given in Table~\ref{tab:testmat}. 
}

\REV{
Fig.~\ref{fig:pe3kerror} shows 
the DOS obtained from RESS-DGC for the pe3k matrix, compared to the DOS obtained by
diagonalizing the matrix directly (``Exact''). The parameters are 
$\sigma=0.25$,$M=4084$,$N_{v}=300$,$N_{t}=100$. Since the goal is to demonstrate high
accuracy, we turn off the hybrid mode in the RESS-DGC method by setting
$\wt{N}_{v}=0$. Fig.~\ref{fig:pe3kerror} shows that the error of
RESS-DGC is less than $10^{-7}$ everywhere. The relatively large error
occurs at the two peaks of the DOS, which agrees with the theoretical
estimate that RESS-DGC needs more random vectors when the spectral
density is large. Similarly Fig.~\ref{fig:shwatererror}  shows the same
comparison for the shwater matrix,
with $\sigma=0.005$,$M=16240$,$N_{v}=640$,$\wt{N}_{v}=0$, and $N_{t}=100$.
More detailed comparison of the error and running time for the two
matrices with increasing number of random vectors $N_{v}$ is given in 
Table~\ref{tab:errorgeneral} and Table~\ref{tab:timegeneral},
respectively. We observe that the error of RESS-DGC rapidly decreases
with respect to the increase of the number of random vectors, while the
error of DGC only decreases marginally. We find that RESS-DGC only
introduces marginally extra cost
compared to the cost of the DGC method.}  

\begin{figure}[h]
  \begin{center}
    \subfloat[(a)]{\includegraphics[width=0.3\textwidth]{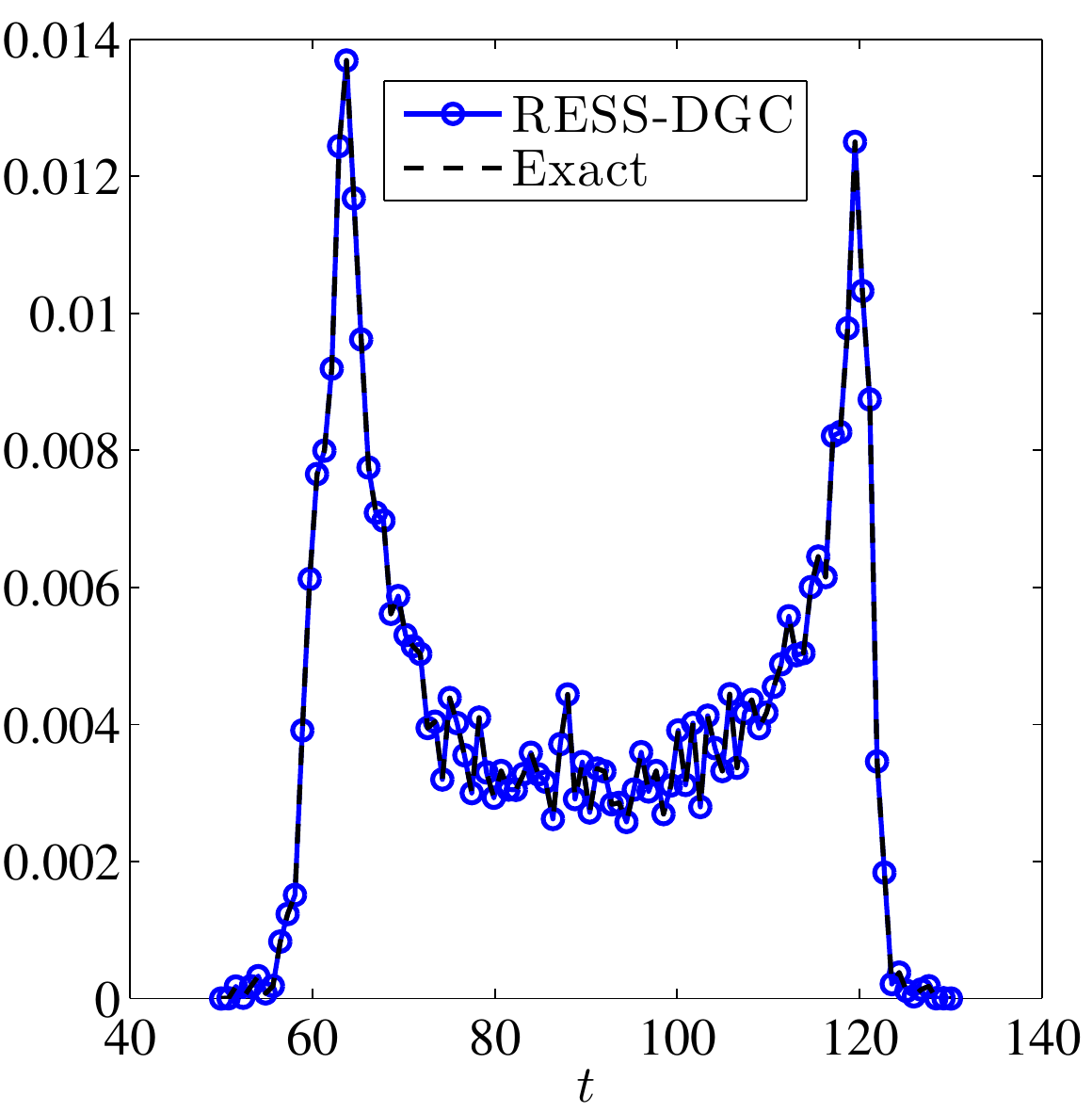}}
    \quad
    \subfloat[(b)]{\includegraphics[width=0.3\textwidth]{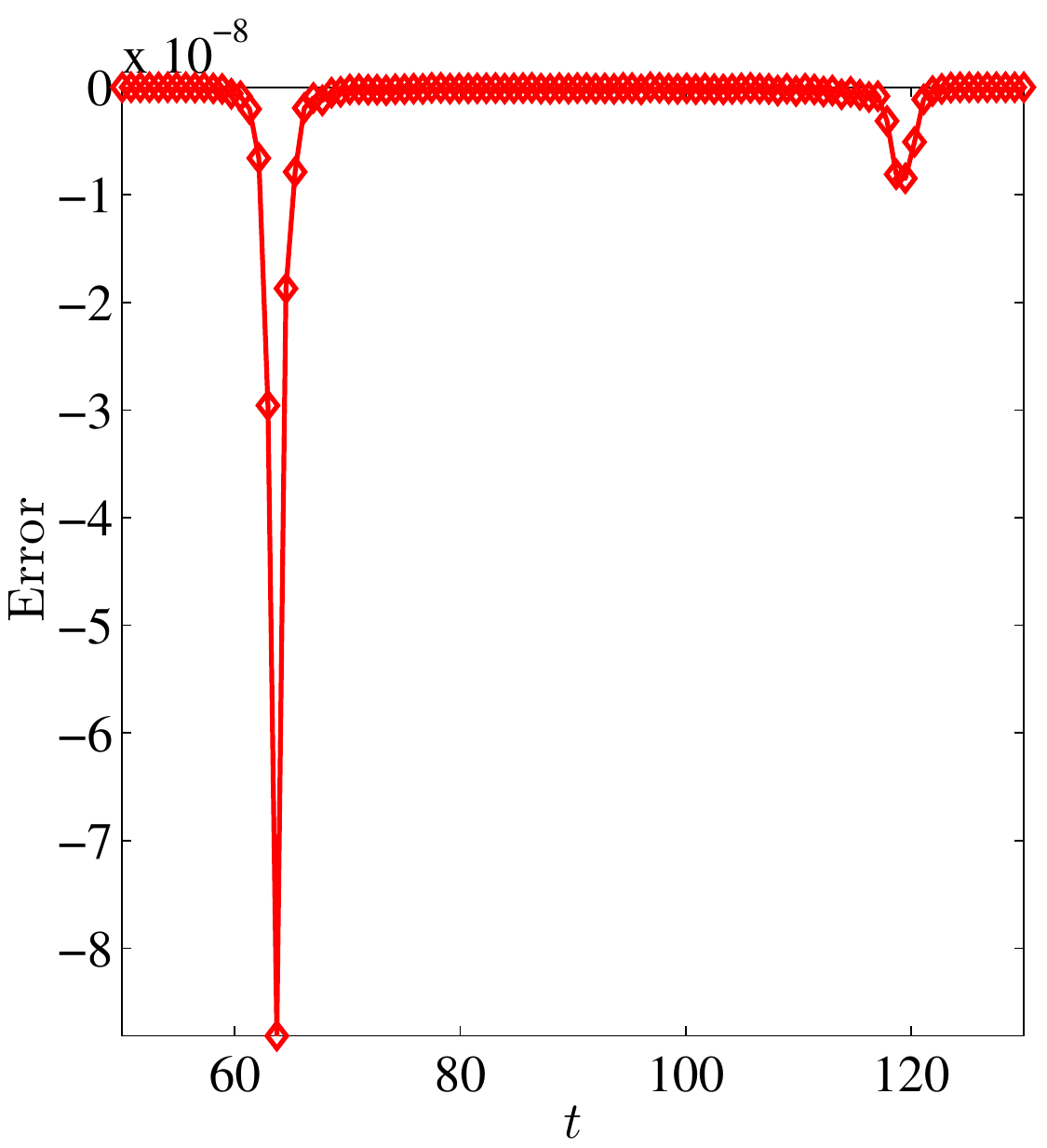}}
  \end{center}
  \caption{\REV{For the pe3k matrix, (a) numerically computed DOS by RESS-DGC, compared to
  the exact DOS, and (b) the error of the DOS computed by RESS-DGC.}}
  \label{fig:pe3kerror}
\end{figure}

\begin{figure}[h]
  \begin{center}
    \subfloat[(a)]{\includegraphics[width=0.3\textwidth]{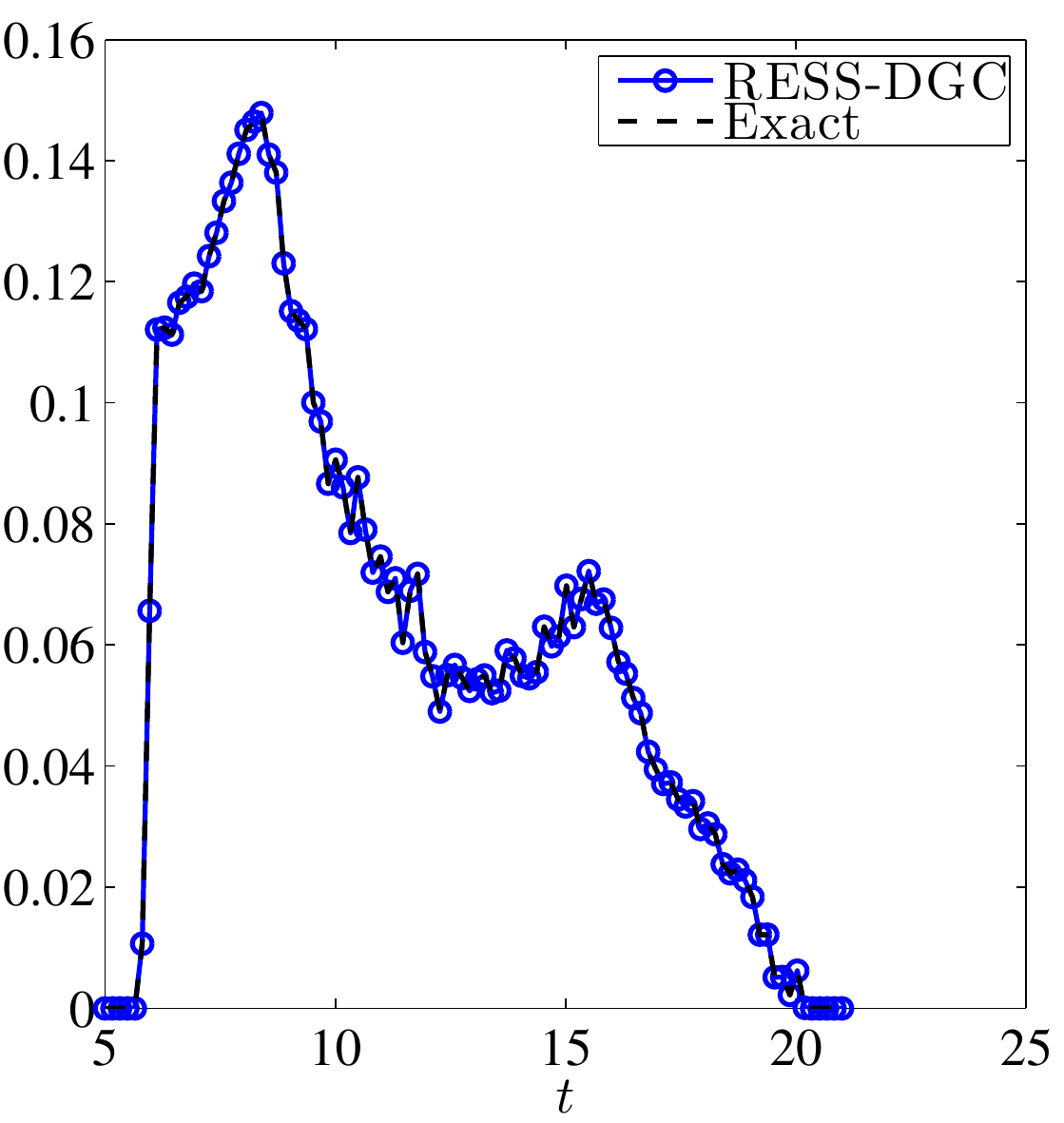}}
    \quad
    \subfloat[(b)]{\includegraphics[width=0.3\textwidth]{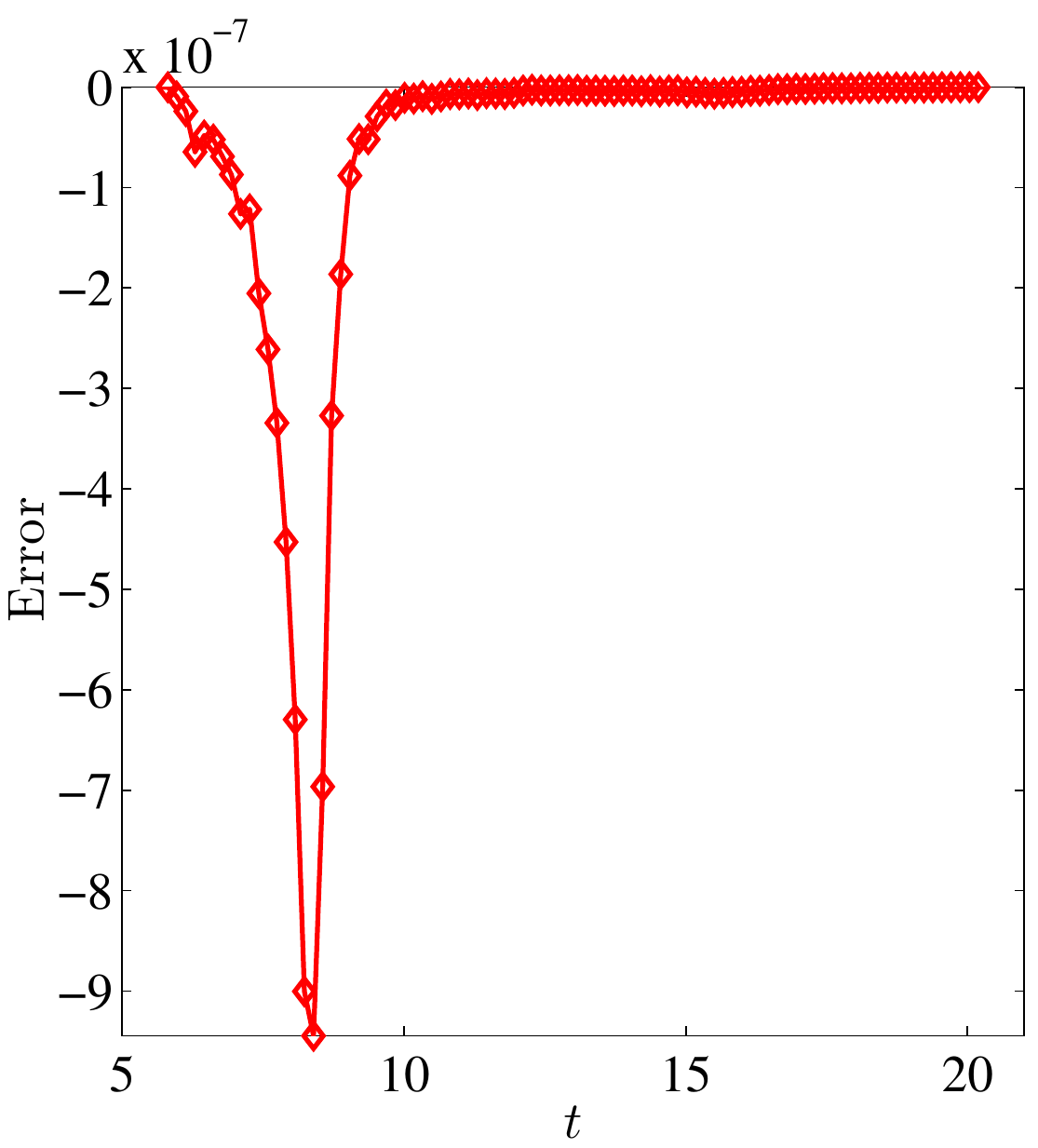}}
  \end{center}
  \caption{\REV{For the shwater matrix, (a) numerically computed DOS by RESS-DGC, compared to
  the exact DOS, and (b) the error of the DOS computed by RESS-DGC.}}
  \label{fig:shwatererror}
\end{figure}

\begin{table}[h]
  \centering
  \REV{
  \begin{tabular}{c|c|c|c}
    \hline
    Matrix & $N_{v}$ & Error RESS-DGC & Error DGC  \\ 
    \hline
    pe3k & 100 & $4.2\times 10^{-2}$ & $1.7\times 10^{-2}$ \\
    pe3k & 200 & $4.0\times 10^{-4}$ & $1.3\times 10^{-2}$ \\
    pe3k & 300 & $4.8\times 10^{-7}$ & $1.1\times 10^{-2}$ \\
    \hline
    shwater & 320 & $9.6\times 10^{-3}$ & $5.7\times 10^{-3}$ \\
    shwater & 480 & $1.2\times 10^{-4}$ & $4.8\times 10^{-3}$ \\
    shwater & 640 & $9.8\times 10^{-7}$ & $3.7\times 10^{-3}$ \\
    \hline
  \end{tabular}
  }
  \caption{Error of the DOS estimation for RESS-DGC and DGC with
  different numbers of random vectors $N_{v}$. }
  \label{tab:errorgeneral}
\end{table}

\begin{table}[h]
  \centering
  \REV{
  \begin{tabular}{c|c|c|c}
    \hline
    Matrix & $N_{v}$ & Time RESS-DGC (s) & Time DGC (s)  \\ 
    \hline
    pe3k & 100 & $780$ & $764$ \\
    pe3k & 200 & $1691$ & $1576$ \\
    pe3k & 300 & $2825$ & $2720$ \\
    \hline
    shwater & 320 & $7371$ & $5513$ \\
    shwater & 480 & $12310$ & $9487$ \\
    shwater & 640 & $23479$ & $18495$\\
    \hline
  \end{tabular}
  }
  \caption{Running time of the DOS estimation for RESS-DGC and DGC with
  different numbers of random vectors $N_{v}$.}
  \label{tab:timegeneral}
\end{table}

\section{Conclusion and future work}\label{sec:conclusion}

For large Hermitian matrices that the only affordable operation is
matrix-vector multiplication, randomized algorithms  can be an
effective way for obtaining a rough estimate the DOS. However, so far 
randomized algorithms are based on Hutchinson's method, which does not use the correlated information among
different random vectors. The accuracy is inherently limited to
$\Or(1/\sqrt{N_{v}})$ where $N_{v}$ is the number of random vectors. 

We demonstrate that randomized low rank decomposition can be used as a
different mechanism to estimate the DOS.  By properly taking into
account the correlated information among the random vectors, we develop
a spectrum sweeping (SS) method that can sweep through the spectrum with
a reasonably small number of random vectors and the accuracy can be
substantially improved compared to $\Or(1/\sqrt{N_{v}})$. However, For
spectrally uniformly distributed matrices with a large number of points
to evaluate the DOS, the direct implementation of the spectrum sweeping
method can have $\Or(N^3)$ complexity. We also present a robust and
efficient implementation of the spectrum sweeping method (RESS).
\REV{For spectrally uniformly distributed matrices}, the
complexity for obtaining the DOS can be improved to $\Or(N^2)$, and the
extra robustness comes from a hybridization with Hutchinson's method for
estimating the residual.  

We demonstrate how the regularized DOS can be used for estimating the
trace of a smooth matrix function. This is based on a careful balance
between the smoothness of the function and that of the DOS.  
Such balance is implemented through a deconvolution procedure. Numerical
results indicate that this allows the accurate estimate of the trace
with again $\Or(N^2)$ scaling.

The current implementation of the spectrum sweeping method is based on
Chebyshev polynomials. Motivated from the discussion
in~\cite{LinSaadYang2015}, Lanczos method would be more efficient than
Chebyshev polynomials for estimating the DOS, and it would be
interesting to extend the idea of spectrum sweeping to Lanczos method
and compare with Chebyshev polynomials.
We also remark that the spectrum sweeping method effectively builds a low
rank decomposition near each point on the spectrum for which the DOS is
to be evaluated. Combining the deconvolution procedure as in the trace
estimation and the low rank decomposition could be potentially useful in
some other applications to directly estimate the whole or part of a
matrix function.  For instance, in electronic structure calculation, the
diagonal entries of the Fermi-Dirac function is needed to evaluate the
electron density. These directions will be explored in the future.

%
%
%

\section*{Acknowledgments}

This work is partially supported by the Scientific Discovery through
Advanced Computing (SciDAC) program and the Center for Applied
Mathematics for Energy Research Applications (CAMERA) funded by U.S.
Department of Energy, Office of Science, Advanced Scientific Computing
Research and Basic Energy Sciences, and by the Alfred P. Sloan
fellowship. 
\REV{We thank the anonymous referees for their comments that greatly
improved the paper.}


\end{document}